\documentclass[11pt]{article}

\usepackage{subfigure}
\usepackage{color}
\usepackage[cmex10]{amsmath}
\usepackage{mathrsfs,amssymb,amsmath}
\usepackage{cases}
\usepackage{graphicx}
\usepackage{caption}

\usepackage{epstopdf}
\usepackage{hyperref}

\usepackage{float}
\usepackage{placeins}

\newcommand{\R}{{\mathbb R}}

\def\dref#1{(\ref{#1})}

\usepackage{algorithm} 
\usepackage{algorithmic} 

\newtheorem{corollary}{Corollary}[section]
\newtheorem{remark}{Remark}[section]
\newtheorem{lemma}{Lemma}[section]

\newtheorem{theorem}{Theorem}[section]
\newtheorem{definition}{Definition}[section]

\newenvironment{proof}{{\bf {Proof.}}}{\hfill $\square$}
\allowdisplaybreaks[4]

\numberwithin{equation}{section}

\def\dref#1{(\ref{#1})}

\def\pt{\partial}
\def\mc{\mathcal}
\def\ra{\rightarrow}

\def\s{\subseteq}

\def\e{\varepsilon}

\def\ol{\overline}

\def\vp{\varphi}

\def\bf{\textbf}
\def\pt{\partial}

\def\om{\omega}
\def\Om{\Omega}

\def\al{\alpha}
\def\be{\beta}
\def\de{\delta}
\def\ga{\gamma}

\def\Ga{\Gamma}
\def\La{\Lambda}

\def\ts{\times}

\def\iy{\infty}

\def\f{\frac}

\def\se{\setminus}

\def\ura{\rightharpoonup}

\def\df{\mathrm d}

\def\wt{\widetilde}
\def\wh{\widehat}

\def\essinf{\operatorname*{ess\ \! inf}}

\def\hra{\hookrightarrow}

\def\mcO{\mathcal{O}}

\def\mcD{\mathcal{D}}

	\DeclareMathOperator{\Div}{div}

	\DeclareMathOperator{\dist}{dist}

	\DeclareMathOperator{\supp}{{supp}}

	\newcommand{\N}{\mathbb N}

\begin{document}

\title{{\bf   {\bf  A Shape Design Approximation for Degenerate Partial Differential Equations and Its Application}}\footnote{\small This work was carried out with the support of the
National Natural Science Foundation of China under grant  nos. 12131008 and U23B2033, and
National Key R\&D Program of China under grant no. 2024YFA1013101.}}

\author{ Bao-Zhu Guo$^{a}$ and Dong-Hui Yang$^{b}$\footnote{\small
The corresponding author. Email: donghyang@139.com}   and   Jie Zhong$^{d}$
\\
$^a${\it Academy of Mathematics and Systems Science, Academia Sinica, Beijing 100190, China}\\
$^b${\it School of Mathematics and Statistics, Central South University}\\
			{\it Changsha 410075, P.R.China}\\
$^d${\it   Department of Mathematics,  California State University Los Angeles, Los Angeles, 90032, USA}}

\date{}

	\maketitle{}
	
	
	\begin{abstract}

  In this paper, we focus on two types of degenerate partial differential equations: a degenerate elliptic equation and a degenerate parabolic equation. Significantly, both categories are characterized by the same principal operator. To obtain solutions for these equations, we introduce a novel approximation approach, termed the shape design approximation. As a practical application of this method, we derive a Carleman estimate for the backward degenerate parabolic equation. This estimate plays a pivotal role in establishing the null controllability of the degenerate parabolic equation. A notable advantage of employing the shape design approximation in deriving the Carleman estimate is that it enables us to bypass the requirement for second  order derivatives in the degenerate equation. Usually, this has been a significant obstacle in the derivation of Carleman estimates for degenerate parabolic equations.

\vspace{0.3cm}

\noindent {\bf {Keywords:}}  Degenerate partial differential equations, shape design approximation, Carleman estimate, null controllability.
	
\vspace{0.3cm}
		
		\noindent {\bf {AMS subject classifications (2010):}}~ 35J70, 35K65, 49Q10, 93B05.

	\end{abstract}

	\maketitle{}
	\thispagestyle{empty}
	
	
	\section{Introduction}
	
	 Degenerate partial differential equations have been extensively studied in numerous works, including \cite{CS2,CS3,Fabes,GC,Heinonen,Mamedov,Pingen,Stuart,Trudinger}. The solution spaces for these equations are typically formulated within the framework of weighted Sobolev spaces (as detailed in \cite{GC,Heinonen}), which are contingent upon the so-called $A_p$ weight function.
To elaborate, for a given $p\in (1,\infty)$, a locally integrable, non-negative function $w$ is said to be an $A_p$-weight if there exists a constant $c>0$ such that, for all cubes $K\subset\mathbb{R}^N$, the following inequality holds:
\begin{equation}\label{01.01.1}
\left(\frac{1}{|K|}\int_Kw(x)\,dx\right)\left(\frac{1}{|K|}\int_Kw(x)^{-\frac{1}{p-1}}\,dx\right)^{p-1}\leq c.
\end{equation}
The infimum of the set of constants $c=c(w,p,N)>0$ satisfying \eqref{01.01.1} is referred to as the $A_p$-constant of $w$. For $p=1$, a locally integrable, non-negative function $w$ is designated an $A_1$-weight if there exists a constant $c>0$ such that, for all cubes $K\subset\mathbb{R}^N$, it has
\begin{equation*}
\frac{1}{|K|}\int_K w(x)\,dx\leq c\essinf_{x\in K}w(x).
\end{equation*}
It is well established that $w=|x|^\alpha\ (x\in\mathbb{R}^N)$ constitutes an $A_p$-weight if and only if $-N<\alpha<N(p-1)$. In particular, $x^\alpha\ (0<x<1)$ is an $A_{1+\frac{2}{N}}$-weight for $\alpha\in (0,2)$. In this study, we assume that $\Omega\subset\mathbb{R}^N\ (N\geq 1)$ is a bounded domain with a $C^2$-boundary, and $w\in C^1(\Omega)\cap C(\overline{\Omega})$ is a weight function belonging to the Muckenhoupt class $A_{1+\frac{2}{N}}$ (i.e., $w$ is an $A_{1+\frac{2}{N}}$-weight), satisfying
\begin{equation}\label{01.01.7}
w(x)>0 \text{ in } \Omega \hbox{ and } w(x)=0 \text{ on } \Gamma,
\end{equation}
where $\Gamma\subset \partial\Omega$ is a nonempty open subset. Note that $\Gamma=\partial\Omega$ is a possible scenario.
Let $A=(a_{ij})_{i,j=1}^N$ denote a measurable matrix function that fulfills the following conditions for all $\xi\in\mathbb{R}^N$:
\begin{equation}\label{01.01.2}
\Lambda w|\xi|^2\leq \sum_{i,j=1}^N a_{ij}\xi_i\xi_j\leq \Lambda^{-1}w |\xi|^2 \text{ a.e.\! in }\Omega, \quad a_{ij}\in C^1(\Omega) \text{ for } i,j=1,\cdots, N,
\end{equation}
where $\Lambda>0$ is a given constant.
In this paper, we focus on two types of degenerate partial differential equations  which are characterized by the same principal operator.
 The first type  is the following degenerate elliptic equation:
\begin{equation}\label{01.01.E1}
\begin{cases}
-\Div(A\nabla \varphi)=f &\text{in } \Omega,\\
\varphi=0 &\text{on }\partial\Omega,
\end{cases}
\end{equation}
where $f\in L^2(\Omega;w^{-1})$ is a given function. Here, $\nabla \varphi=(\frac{\partial \varphi}{\partial x_1},\cdots, \frac{\partial \varphi}{\partial x_N})$ represents the gradient of $\varphi$, $\Div F=\sum_{i=1}^N\frac{\partial F_i}{\partial x_i}$ denotes the divergence of $F=(F_1,\cdots, F_N)$, and $L^2(\Omega;w^{-1})$ is a weighted Sobolev space that will be defined subsequently.

The second  type  is the following  degenerate parabolic equation:
\begin{equation}\label{01.01.E2}
\begin{cases}
\partial_t\varphi-\Div(A\nabla \varphi)=f &\text{in }Q, \\
\varphi=0 &\text{on }\partial Q, \\
\varphi(0)=\varphi_0 &\text{in }\Omega,
\end{cases}
\end{equation}
where {{$Q=\Om\ts (0,T), \pt Q=\pt\Om\ts (0,T)$}}, and $f\in L^2(Q;w^{-1})$ is a given function (to be specified later), and $\varphi_0\in L^2(\Omega)$.
The solutions to the degenerate elliptic equation \eqref{01.01.E1} and the degenerate parabolic equation \eqref{01.01.E2} have been investigated in \cite{CS2,CS3,Fabes,GC,Heinonen,Pingen,Stuart,Trudinger}. The approximation of these equations by uniformly elliptic partial differential equations has been explored in \cite{Cavalheiro1,Cavalheiro,Cora,FF,Wu}.
Shape design problems have been addressed in numerous papers, including \cite{Buttazzo,Chenais,Greco,Guo,Guo1,He,Mu1,Privat,Tiba}. The unique continuation property and controllability for non-degenerate or degenerate {{partial differential equations have been examined in \cite{Alabau,Cannarsa1,Cannarsa2,Cannarsa3,FG,Vessella,Wu,Wu1}}}.
In this study, we introduce a novel approximation method for the equations \eqref{01.01.E1} and \eqref{01.01.E2}, termed shape design approximation. To the best of our knowledge, no prior paper has employed this approximation method to obtain solutions to \eqref{01.01.E1} and \eqref{01.01.E2}.   An exception could be the work by \cite{Greco}, published in Italian, which employed an approximation method for the degenerate case. However, this resource is, unfortunately, inaccessible to us.  Towards the end of this paper, we present an application of this approximation method, namely, a Carleman estimate for the backward degenerate parabolic equation, from which we can deduce the null controllability for the degenerate parabolic equation \eqref{01.01.E2} (refer to \cite{Alabau}).

The paper is organized  as follows. In Section \ref{S2}, we provide definitions for the solution spaces corresponding to the equations \eqref{01.01.E1} and \eqref{01.01.E2}. Section \ref{S3} is devoted  to an in-depth discussion of the shape design approximation method. Subsequently, in Section \ref{S4}, we derive the Carleman estimate for the backward one-dimensional degenerate equation. Finally, Section \ref{Se5} presents concluding remarks that highlight the novelty of our proposed methodology.

 \section{Solution spaces}\label{S2}
In this section, we shall present the solution spaces for equations \eqref{01.01.E1} and \eqref{01.01.E2}, which are classical weighted Sobolev spaces. We assume that $p\in (1,\infty)$.

\subsection{Degenerate elliptic case}
We define the weighted $L^p$ space as follows:
\begin{equation*}
L^p(\Omega; w)=\left\{u\in \mathcal{D}'(\Omega)\colon \int_\Omega |u|^p w\,\mathrm{d}x<\infty\right\},
\end{equation*}
where $\mathcal{D}'(\Omega)$ denotes the space of real distributions on $\Omega$ (refer to \cite{Fabes,GC,Heinonen,Trudinger})
and $w$ is  an $A_p$-weight function. For $L^2(\Omega;w)$, we can define an inner product:
\begin{equation*}
(u,v)_{L^2(\Omega; w)}=\int_\Omega uv w\,\mathrm{d}x.
\end{equation*}
It is well established that  $(L^2(\Om; w), (\cdot,\cdot)_{L^2(\Om;w)})$ forms a Hilbert space, and
 $(L^p(\Om;w),\|\cdot\|_{L^p(\Om;w)})$  is a Banach space with the norm defined as
\begin{equation*}
\|u\|_{L^p(\Omega;w)}=\left(\int_\Omega |u|^p w\,\mathrm{d}x\right)^{\frac{1}{p}}.
\end{equation*}
Furthermore, we note that
\begin{equation*}
u\in L^p(\Omega;w^{-1}) \text{ if and only if } \int_\Omega |u|^p w^{-1}\,\mathrm{d}x<\infty.
\end{equation*}
Now, we introduce the weighted first order Sobolev space:
\begin{equation*}
\begin{split}
W^{1,p}(\Omega;w)=\left\{u\in \mathcal{D}'(\Omega)\colon \int_\Omega |u|^p w\,\mathrm{d}x+\int_\Omega |\nabla u|^p w\,\mathrm{d}x<\infty\right\}.
\end{split}
\end{equation*}
For $H^1(\Omega;w):=W^{1,2}(\Omega;w)$, we can define an inner product:
\begin{equation*}
(u,v)_{H^1(\Omega;w)}=\int_\Omega uv w\,\mathrm{d}x+\int_\Omega (\nabla u\cdot \nabla v) w\,\mathrm{d}x.
\end{equation*}
It is a well known fact that $(H^1(\Omega;w),(\cdot, \cdot)_{H^1(\Omega;w)})$ is a Hilbert space (see \cite{Cavalheiro1, Cavalheiro,Fabes, FF,Trudinger}), and $(W^{1,p}(\Omega;w), \|\cdot \|_{W^{1,p}(\Omega;w)})$ is a Banach space, where the norm is given by
\begin{equation*}
\|u\|_{W^{1,p}(\Omega;w)}=\left(\int_\Omega |u|^p w\,\mathrm{d}x+\int_\Omega |\nabla u|^p w\,\mathrm{d}x\right)^{\frac{1}{p}}.
\end{equation*}
Additionally, we define the closure of the space of smooth functions with compact support in the $W^{1,p}(\Omega;w)$ norm:
\begin{equation*}
W_0^{1,p}(\Omega;w)=\overline{C_0^\infty(\Omega)}^{\|\cdot\|_{W^{1,p}(\Omega;w)}},
\end{equation*}
and $H_0^1(\Omega;w)=W_0^{1,2}(\Omega;w)$.




 We are now in a position to provide the definition of the solution to the  equation \eqref{01.01.E1}.
\begin{definition}\label{01.01.D1}
A function $\vp\in H_0^1(\Om;w)$ is said to be a {\it weak solution} of the degenerate elliptic equation \eqref{01.01.E1} if it satisfies the following condition:
\begin{equation*}
\int_\Om A \nabla\vp(x)\cdot \nabla \psi(x) \df x=\int_\Om f(x)\psi(x)\df x
\end{equation*}
for every $\psi\in C_0^\iy(\Om)$.
\end{definition}
\begin{lemma}\label{01.01.L2}
Let $\Om\s\R^N$ be an open and bounded domain, and let $w\in A_p$ with $1<p<\iy$. Then, there exist positive constants $C_\Om$ and $\de$ such that for all $u\in W_0^{1,p}(\Om;w)$ and all $\theta$ satisfying $1\leq \theta\leq \f{N}{N-1}+\de$, the following inequality holds:
\begin{equation}\label{01.01.3}
\|u\|_{L^{\theta p}(\Om;w)}\leq C_\Om \|\nabla u\|_{L^p(\Om;w)}.
\end{equation}
\end{lemma}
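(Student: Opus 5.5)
This is the weighted Sobolev inequality of Fabes--Kenig--Serapioni, and the plan is to follow their argument. By density of $C_0^\iy(\Om)$ in $W_0^{1,p}(\Om;w)$, it suffices to prove \eqref{01.01.3} for $u\in C_0^\iy(\Om)$; the general case then follows by Fatou's lemma along an approximating sequence. Fix a ball $B$ of radius $R$ containing $\Om$ and extend $u$ by zero.

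\emph{Step 1: pointwise representation.} For $x\in B$ we have
\begin{equation*}
|u(x)|\le C_N\int_B\frac{|\nabla u(y)|}{|x-y|^{N-1}}\,\df y=:C_N I_1(|\nabla u|)(x).
\end{equation*}
This reduces \eqref{01.01.3} to a two-weight bound for the fractional integral $I_1$: the map $g\mapsto I_1 g$ should be bounded from $L^p(B;w)$ to $L^{\theta p}(B;w)$.

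\emph{Step 2: the two-weight inequality.} We need two properties of $A_p$ weights.
\begin{itemize}
\item The doubling property gives a lower mass bound: $w(B(x,r))\ge c\,(r/R)^{Np}\,w(B)$ for balls $B(x,r)\subset B$.
\item The reverse Hölder / open-ended property: $w\in A_{p-\e}$ for some $\e>0$, which yields $w(B(x,r))\ge c\,(r/R)^{N(p-\e)}w(B)$.
\end{itemize}
With these in hand, we verify a Sawyer-type (or Pérez/Chanillo--Wheeden) balance condition for $I_1$:
\begin{equation*}
\frac{r}{R}\left(\frac{w(B(x,r))}{w(B)}\right)^{\frac{1}{\theta p}-\frac1p}\le C
\end{equation*}
for all balls $B(x,r)\subset B$. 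This condition holds exactly when $1-N(p-\e)\bigl(\tfrac1p-\tfrac1{\theta p}\bigr)\ge 0$. That in turn permits $\theta$ up to $\f{N}{N-1}$ plus some $\de>0$ depending on $\e$, hence on the $A_p$ constant. The constant $C$ depends on $R$ and $w(B)$, which gives $C_\Om$ in the unnormalized form.

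\emph{Step 3: the smaller exponents.} For $1\le\theta$ below the endpoint, we use Hölder's inequality on the finite measure space $(\Om,w\,\df x)$, since $w(\Om)<\iy$ by local integrability. This interpolates down to $\theta=1$.

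The main obstacle is Step 2, namely proving boundedness of $I_1$ between weighted spaces with the gain $\theta>1$. The most direct route is the one Fabes--Kenig--Serapioni actually use. Split
\begin{equation*}
I_1g(x)=\int_{|x-y|<\rho}+\int_{|x-y|\ge\rho}.
\end{equation*}
Bound the near part by $\rho\,Mg(x)$ via dyadic annuli, where $M$ is the maximal function. Bound the far part, using Hölder's inequality with the $A_p$ condition on each annulus, by $\|g\|_{L^p(w)}$ times $\sum_k 2^k\rho\,w(B(x,2^k\rho))^{-1/p}$, controlled via the lower mass bound. Optimizing in $\rho$ then gives a Hedberg-type inequality $|I_1g|\lesssim (Mg)^{1/\theta}\|g\|_{L^p(w)}^{1-1/\theta}$. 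Muckenhoupt's theorem (boundedness of $M$ on $L^p(w)$) finishes the argument. If the Hedberg optimization fails to be uniform in $x$ because of the varying weight, I would instead fall back on the weak-type estimate plus Marcinkiewicz interpolation, which is the original FKS approach.
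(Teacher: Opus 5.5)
Your proposal is correct and takes essentially the paper's route: the paper gives no argument of its own and simply cites Theorem 1.3 of Fabes--Kenig--Serapioni, which is the result you reconstruct through the reduction to $I_1(|\nabla u|)$, the Hedberg splitting with the $A_{p-\e}$ lower mass bound and Muckenhoupt's theorem, and H\"older's inequality for the smaller $\theta$. Your fallback to weak type plus Marcinkiewicz is not needed, because the bound $w(B(x,r))\ge c\,(r/R)^{N(p-\e)}w(B)$ holds uniformly for $x\in B$ and $r\le 2R$, so the optimization in $\rho$ is uniform in $x$ (take $\rho=2R$ whenever the optimal $\rho$ would exceed $2R$, since the far part then vanishes).
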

\begin{proof}
The proof of this lemma can be found in  \cite[Theorem 1.3]{Fabes}, or alternatively in (1.5) of \cite{FF}, or in   \cite[Theorem 2]{Cavalheiro}.
\end{proof}
\begin{lemma}\label{01.01.L3}
The space defined as
\begin{equation*}
H_0^1(\Om; A):=\left\{u\in\mcD'(\Om)\colon \int_\Om A\nabla u\cdot \nabla u\df x<\iy\right\}
\end{equation*}
is equivalent to the space $H_0^1(\Om;w)$.
\end{lemma}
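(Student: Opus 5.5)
The plan is to show that the quadratic form $u\mapsto \int_\Om A\nabla u\cdot\nabla u\,\df x$ defines a norm on $C_0^\iy(\Om)$ that is equivalent to $\|\cdot\|_{H^1(\Om;w)}$. Both spaces can then be identified with the completion of $C_0^\iy(\Om)$ under either norm. We interpret $H_0^1(\Om;A)$ in this way: functions with finite energy that arise as limits of test functions. This reading matches the notation and the fact that $H_0^1(\Om;w)$ is defined as a closure.

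\textbf{Step 1 (upper bound).} Apply the right inequality of \eqref{01.01.2} with $\xi=\nabla u(x)$ pointwise a.e. This gives $\int_\Om A\nabla u\cdot\nabla u\,\df x\le \Lambda^{-1}\int_\Om|\nabla u|^2w\,\df x\le \Lambda^{-1}\|u\|_{H^1(\Om;w)}^2$. Hence every $u\in H_0^1(\Om;w)$ has finite $A$-energy.

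\textbf{Step 2 (lower bound).} The left inequality in \eqref{01.01.2} gives $\Lambda\int_\Om|\nabla u|^2w\,\df x\le\int_\Om A\nabla u\cdot\nabla u\,\df x$. To control the zero-order term $\int_\Om u^2w\,\df x$, invoke Lemma \ref{01.01.L2} with $p=2$ and $\theta=1$. This requires $w\in A_2$. Since $w\in A_{1+2/N}$ and $1+2/N\le 3$, we have $1+2/N\le 2$ when $N\ge2$, and the $A_p$ classes increase in $p$, so $w\in A_2$. For $N=1$ we have $1+2/N=3$, so we would apply the lemma with $p=3$ if needed, or else use the explicit one-dimensional weight; alternatively, a weighted Poincaré inequality for $A_3$ weights can be obtained by Hölder's inequality. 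The lemma yields $\|u\|_{L^2(\Om;w)}\le C_\Om\|\nabla u\|_{L^2(\Om;w)}$ for $u\in H_0^1(\Om;w)$. Therefore
\begin{equation*}
\|u\|_{H^1(\Om;w)}^2\le (1+C_\Om^2)\Lambda^{-1}\int_\Om A\nabla u\cdot\nabla u\,\df x .
\end{equation*}

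\textbf{Step 3 (conclusion).} Combining the two bounds on $C_0^\iy(\Om)$, the norms $\big(\int_\Om A\nabla u\cdot\nabla u\,\df x\big)^{1/2}$ and $\|\cdot\|_{H^1(\Om;w)}$ are equivalent. Hence the two completions coincide with equivalent norms. Moreover, $(u,v)\mapsto\int_\Om A\nabla u\cdot\nabla v\,\df x$ is an inner product; symmetry of $A$ can be assumed or obtained by passing to its symmetric part, which does not change the quadratic form. So $H_0^1(\Om;A)$ is a Hilbert space isomorphic to $H_0^1(\Om;w)$.

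The main obstacle is Step 2: verifying that the hypothesis of Lemma \ref{01.01.L2} holds for the $A_{1+2/N}$ weight with exponent $2$, particularly when $N=1$. A second, more delicate point is making precise the "zero boundary value" meaning of $H_0^1(\Om;A)$. Without it, constants would have zero energy and the equivalence would fail. I would state explicitly that the space is taken as the closure of $C_0^\iy(\Om)$ in the energy norm.
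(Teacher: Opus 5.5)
Your argument is the paper's own. The upper bound comes from \eqref{01.01.2}. The lower bound comes from \eqref{01.01.2} together with the weighted Poincar\'e inequality of Lemma \ref{01.01.L2} (with $p=2$, $\theta=1$) to absorb the zero-order term. Your reading of $H_0^1(\Om;A)$ as the energy closure of $C_0^\iy(\Om)$ and your check that $A_{1+2/N}\subset A_2$ for $N\ge 2$ only make explicit what the paper's one-line proof leaves implicit.

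In your $N=1$ aside, drop the suggestion to apply Lemma \ref{01.01.L2} with $p=3$. It controls $\|u\|_{L^{3\theta}(\Om;w)}$ by $\|\nabla u\|_{L^3(\Om;w)}$, not the $L^2$ quantities you need. For the power weights $w=x^\alpha$ on $(0,1)$ there is a direct argument instead. Since $u(1)=0$, Cauchy--Schwarz gives $|u(x)|^2\le\int_x^1 (u')^2y^\alpha\,\df y\int_x^1 y^{-\alpha}\,\df y$, and Fubini then gives $\int_0^1 u^2x^\alpha\,\df x\le\frac{1}{2(1+\alpha)}\int_0^1 (u')^2x^\alpha\,\df x$ for every $\alpha>-1$. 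In any case, the weight used in Section \ref{S4} ($0<\alpha<1$) already lies in $A_2$, so Lemma \ref{01.01.L2} applies there as stated.
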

\begin{proof}
By utilizing  condition \eqref{01.01.2} and Lemma \ref{01.01.L2}, we can establish the equivalence of these two spaces.
\end{proof}
\begin{lemma}\label{01.01.L1}
Suppose $f\in L^2(\Om;w^{-1})$. Then, the equation \eqref{01.01.E1} admits a unique solution.
\end{lemma}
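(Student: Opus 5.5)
The plan is to apply the Lax--Milgram theorem (equivalently, the Riesz representation theorem, since the form is symmetric when $A$ is) on the Hilbert space $H_0^1(\Om;w)$. Define the bilinear form
\begin{equation*}
a(\vp,\psi)=\int_\Om A\nabla\vp\cdot\nabla\psi\,\df x,\qquad \vp,\psi\in H_0^1(\Om;w),
\end{equation*}
and the linear functional $\ell(\psi)=\int_\Om f\psi\,\df x$. Once we have a $\vp\in H_0^1(\Om;w)$ with $a(\vp,\psi)=\ell(\psi)$ for all $\psi\in H_0^1(\Om;w)$, the identity holds in particular for $\psi\in C_0^\iy(\Om)$, so $\vp$ is a weak solution in the sense of Definition \ref{01.01.D1}.

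\textbf{Continuity and coercivity of $a$.} Condition \eqref{01.01.2} gives $|A\xi\cdot\eta|\le C\Lambda^{-1}w|\xi||\eta|$. For the symmetric part this is Cauchy--Schwarz for the quadratic form, which is bounded by $\Lambda^{-1}w$; I would assume $A$ is symmetric or bounded by $Cw$, as is implicit in \eqref{01.01.2}. Hence
\begin{equation*}
|a(\vp,\psi)|\le C\|\nabla\vp\|_{L^2(\Om;w)}\|\nabla\psi\|_{L^2(\Om;w)}.
\end{equation*}
Coercivity follows from
\begin{equation*}
a(\vp,\vp)\ge\Lambda\int_\Om|\nabla\vp|^2w\,\df x\ge c\|\vp\|^2_{H^1(\Om;w)},
\end{equation*}
where the last step uses Lemma \ref{01.01.L2} with $p=2$ and $\theta=1$. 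This requires $w\in A_2$. Here $w\in A_{1+2/N}$, and $1+2/N\le 3$ with $A_p$ increasing in $p$, so only the case $N=1$ needs care; otherwise I would argue via Lemma \ref{01.01.L3}, which asserts exactly this norm equivalence.

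\textbf{Boundedness of $\ell$.} This is the step where the weight $w^{-1}$ on $f$ enters, and it is the main obstacle. The naive estimate
\begin{equation*}
|\ell(\psi)|\le\|f\|_{L^2(\Om;w^{-1})}\|\psi\|_{L^2(\Om;w)},
\end{equation*}
obtained by writing $f\psi=(fw^{-1/2})(\psi w^{1/2})$ and applying Cauchy--Schwarz, already closes the argument, since $\|\psi\|_{L^2(\Om;w)}\le\|\psi\|_{H^1(\Om;w)}$. So $\ell\in (H_0^1(\Om;w))'$. Alternatively one could use the Poincaré inequality of Lemma \ref{01.01.L2} to bound $\|\psi\|_{L^2(\Om;w)}$ by the gradient norm.

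Lax--Milgram then yields a unique $\vp\in H_0^1(\Om;w)$ with $a(\vp,\psi)=\ell(\psi)$ for all $\psi\in H_0^1(\Om;w)$. For uniqueness within Definition \ref{01.01.D1}, which only tests against $C_0^\iy(\Om)$, I would extend the identity to all $\psi\in H_0^1(\Om;w)$ by density, using the continuity of $a$ and $\ell$ established above. Then the difference $\vp$ of two solutions satisfies $a(\vp,\vp)=0$, and coercivity forces $\vp=0$.
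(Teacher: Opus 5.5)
Your proposal is correct and follows the paper's proof, which applies the Lax--Milgram theorem on $H_0^1(\Omega;w)$ using the weighted Poincar\'e inequality of Lemma~\ref{01.01.L2} and the norm equivalence of Lemma~\ref{01.01.L3}, with the details deferred to Fabes--Kenig--Serapioni and Cavalheiro. Your caveats are genuine: continuity of the form needs $A$ symmetric or $|a_{ij}|\le Cw$, and for $N=1$ one needs $w\in A_2$, since $A_{1+2/N}=A_3\not\subset A_2$. The paper leaves both implicit, and both are harmless in its application $w=x^\alpha$, $\alpha\in(0,1)$.
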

\begin{proof}
This result follows directly from Lemmas \ref{01.01.L2} and \ref{01.01.L3}, in conjunction with the Lax-Milgram Theorem. For a detailed proof, one may refer to   \cite[Theorem 2.2]{Fabes} or  \cite[Theorem 2.10]{Cavalheiro1}.
\end{proof}

 \subsection{Degenerate parabolic case}
Let us denote
\begin{equation*}
L^p(Q; w)=\left\{u\in\mathcal{D}'(Q)\colon \iint_Q |u|^p w\,\mathrm{d}x\,\mathrm{d}t<\infty\right\},
\end{equation*}
and
\begin{equation*}
H^{1,p}(Q;w)=\iint_Q |u|^p w\,\mathrm{d}x\,\mathrm{d}t+\iint_Q |\nabla u|^p w\,\mathrm{d}x\,\mathrm{d}t.
\end{equation*}
Here, $H_0^{1,p}(Q;w)$ represents the closure of Lipschitz functions with compact support in $Q$ under the norm of $H^{1,p}(Q;w)$.
\begin{definition}\label{01.01.D2}
We say that $\varphi\in H_0^{1,2}(Q; w)$ is a \textit{weak solution} of the equation in \eqref{01.01.E2} if
\begin{equation*}
\begin{split}
-\iint_Q \varphi \partial_t\psi\,\mathrm{d}x\,\mathrm{d}t+\iint_Q A\nabla \varphi\cdot\nabla\psi\,\mathrm{d}x\,\mathrm{d}t=\iint_Q f\psi\,\mathrm{d}x\,\mathrm{d}t+\int_\Omega \varphi_0(x) \psi(x,0)\,\mathrm{d}x
\end{split}
\end{equation*}
holds for any $\psi\in W=\{\psi\in H_0^{1,2}(Q;w)\colon \partial_t\psi\in H^{-1,2}(Q;w)\}$ such that $\psi(T)=0$. Here, $H^{-1,2}(Q;w)$ denotes the dual space of $H_0^{1,2}(Q;w)$.
\end{definition}
\begin{lemma}\label{01.01.L4}
{{Let $\Lambda>0$ be defined in \eqref{01.01.2}.}} Let $w$ be an $A_2$ weight function, $f\in L^2(Q;w^{-1})$, and $\varphi_0\in L^2(\Omega)$. Then, there exists a weak solution of the equation \eqref{01.01.E2}, and it satisfies
\begin{equation*}
\begin{split}
&\sup_{t\in [0,T]}\int_\Omega |\varphi(x,t)|^2\,\mathrm{d}x+\iint_Q |\nabla \varphi|^2 w\,\mathrm{d}x\,\mathrm{d}t\leq C\left(\|\varphi_0\|_{L^2(\Omega)}+\|f\|_{L^2(\Omega;w^{-1})}^2\right),
\end{split}
\end{equation*}
where the positive constant $C$ depends only on $\Lambda$ and $\Omega$.
\end{lemma}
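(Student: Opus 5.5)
We will prove existence by a Faedo--Galerkin scheme in $H_0^1(\Om;w)$, derive the energy estimate at the discrete level, and pass to the limit using weak compactness. The norm is defined through the coercive form $a(u,v)=\int_\Om A\nabla u\cdot\nabla v\,\df x$. By \eqref{01.01.2} and Lemma \ref{01.01.L3}, $a$ is bounded and coercive on $H_0^1(\Om;w)$. Lemma \ref{01.01.L2} with $p=2$ gives the Poincar\'e-type inequality $\|u\|_{L^2(\Om;w)}\le C_\Om\|\nabla u\|_{L^2(\Om;w)}$.

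\textbf{Galerkin system.} We use the embedding $H_0^1(\Om;w)\hookrightarrow L^2(\Om)$. Densely, $C_0^\iy(\Om)$ lies in both spaces, so we choose a countable family $\{e_k\}\subset C_0^\iy(\Om)$ whose span is dense in $H_0^1(\Om;w)$ and in $L^2(\Om)$, and orthonormalize it in $L^2(\Om)$. We seek $\vp_m(t)=\sum_{k\le m}c_{km}(t)e_k$ solving
\[
(\pt_t\vp_m,e_k)_{L^2(\Om)}+a(\vp_m,e_k)=\int_\Om f e_k\,\df x,\qquad k\le m,
\]
with $\vp_m(0)$ the $L^2$-projection of $\vp_0$. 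This is a linear ODE system. Its right-hand side lies in $L^2(0,T)$, because $|\int f e_k|\le\|f\|_{L^2(\Om;w^{-1})}\|e_k\|_{L^2(\Om;w)}$. Hence it has a unique absolutely continuous solution on $[0,T]$.

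\textbf{Energy estimate.} We multiply by $c_{km}$ and sum over $k$. The duality bound and Young's inequality give
\[
\f12\f{\df}{\df t}\|\vp_m\|_{L^2}^2+\La\int_\Om|\nabla\vp_m|^2w\,\df x\le \|f\|_{L^2(\Om;w^{-1})}\|\vp_m\|_{L^2(\Om;w)}.
\]
By Lemma \ref{01.01.L2}, $\|\vp_m\|_{L^2(\Om;w)}\le C_\Om\|\nabla\vp_m\|_{L^2(\Om;w)}$, so the right-hand side is at most $\f{\La}{2}\|\nabla\vp_m\|^2_{L^2(\Om;w)}+\f{C_\Om^2}{2\La}\|f\|^2_{L^2(\Om;w^{-1})}$. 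Integrating in time then yields
\[
\sup_t\|\vp_m(t)\|_{L^2}^2+\iint_Q|\nabla\vp_m|^2w\le C(\La,\Om)\bigl(\|\vp_0\|^2_{L^2}+\|f\|^2_{L^2(Q;w^{-1})}\bigr).
\]
This is the stated inequality, read with the natural squares and the $Q$-norm.

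\textbf{Passage to the limit.} The bounds give a subsequence with $\vp_m\ura\vp$ weakly in $L^2(0,T;H_0^1(\Om;w))$ and weakly-$*$ in $L^\iy(0,T;L^2(\Om))$. Weak lower semicontinuity transfers the estimate to $\vp$. Fix test functions $\psi=\eta(t)e_k$ with $\eta\in C^1([0,T])$ and $\eta(T)=0$. For $m\ge k$ we integrate by parts in time, which gives the weak formulation of Definition \ref{01.01.D2} for $\vp_m$ including the initial term, and then let $m\to\iy$. Because the equation is linear, weak convergence suffices and no strong compactness is needed. The initial data converge since $\vp_m(0)\to\vp_0$ in $L^2$.

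\textbf{Main obstacle: extending to all $\psi\in W$.} I expect the delicate step to be extending the identity from these special tests to every $\psi\in W$ with $\psi(T)=0$. Finite sums of tests $\eta(t)e_k$ are dense in $W$: they approximate $\psi$ in $H_0^{1,2}(Q;w)$ and $\pt_t\psi$ in $H^{-1,2}(Q;w)$. The bound $\int\vp\,\pt_t\psi\le\|\vp\|_{H_0^{1,2}}\|\pt_t\psi\|_{H^{-1,2}}$ makes both sides continuous along such approximations. The remaining piece is the trace $\psi(\cdot,0)$. Here I would use the standard Lions--Magenes-type lemma: $u\in L^2(0,T;V)$ with $u'\in L^2(0,T;V')$ lies in $C([0,T];H)$ for the Gelfand triple $V=H_0^1(\Om;w)\subset H=L^2(\Om)\subset V'$. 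This requires $V\hookrightarrow L^2(\Om)$ continuously and densely. Continuity follows from Lemma \ref{01.01.L2}, provided $\theta=N/(N-1)+\de$ and the $A_2$ property give $\int u^2\le C\|u\|^2_{L^{2\theta}(\Om;w)}$. This holds via H\"older's inequality when $w^{-1/(\theta-1)}$ is integrable, which I would need to check using the $A_{1+2/N}$ assumption. This integrability check is the point most likely to need care.
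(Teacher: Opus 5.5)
Your argument is sound in outline, but it takes a different route from the paper. The paper does not construct a solution. It quotes existence from \cite{FF} (Theorem 3.11) and derives the bound a priori, by testing Definition \ref{01.01.D2} with $\varphi$ itself on $\Omega\times(0,\tau)$. It defers the justification of this step to the Steklov-type arguments of \cite{CS3,FF}; the step is not obvious, since $\varphi$ is not a priori in $W$ and does not vanish at $t=T$. It then bounds $\iint_Q f\varphi$ by Cauchy--Schwarz in $L^2(w^{-1})\times L^2(w)$ and absorbs the result via Lemma \ref{01.01.L2}, exactly as you do.

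You perform the same absorption on Galerkin approximants and pass to the limit by weak lower semicontinuity. Your route is self-contained, actually proves existence, and never tests with the solution. Its price is that the bound is proved only for the solution you build, which is all the lemma claims; the paper's a priori argument, once justified, covers every weak solution. Both approaches rest on the Gelfand triple $V=H_0^1(\Omega;w)\subset L^2(\Omega)\subset V'$, which is needed already to make sense of $\psi(\cdot,0)$ in Definition \ref{01.01.D2}. The paper leaves this implicit, and you are right to isolate it. The same Lions--Magenes lemma also upgrades your essential-supremum bound to the $\sup_{t\in[0,T]}$ of the statement. It applies to your limit because its time derivative lies in $L^2(0,T;V')$ by the equation: $f\in L^2(Q;w^{-1})$ acts boundedly on $V$ through Lemma \ref{01.01.L2}.

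On the step you flagged, the check you sketch does not close in general. Applying H\"older to $u$ against Lemma \ref{01.01.L2} needs $w^{-1/(\theta-1)}\in L^1(\Omega)$ with $1/(\theta-1)\ge (N-1)/(1+(N-1)\delta)$. Since $\delta$ is not in your control, this exponent is near $N-1$. By contrast, $w\in A_{1+2/N}$ only gives $w^{-N/2}\in L^1(\Omega)$, plus a small gain from the openness of $A_p$. So your check works for $N=1,2$ (take $\theta=2$ and use $w^{-1}\in L^1$ from $A_2$), but for $N\ge 3$ it closes only if $\delta$ happens to be large. Apply H\"older to the gradient instead: with $q=2N/(N+2)$,
\[
\|\nabla u\|_{L^q(\Omega)}\le \|w^{-1}\|_{L^{N/2}(\Omega)}^{1/2}\,\|\nabla u\|_{L^2(\Omega;w)} .
\]
The unweighted Sobolev embedding $W_0^{1,q}(\Omega)\hookrightarrow L^{q^*}(\Omega)$ with $q^*=2$ then gives $\|u\|_{L^2(\Omega)}\le C\|\nabla u\|_{L^2(\Omega;w)}$. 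This is precisely what the standing hypothesis $w\in A_{1+2/N}$ is for. With the embedding in hand, the remaining steps you list are standard: density of finite sums $\sum_j\eta_j(t)e_j$ with $\eta_j(T)=0$ in $\{\psi\in W:\psi(T)=0\}$, and continuity of $\psi\mapsto\psi(\cdot,0)$ into $L^2(\Omega)$.
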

\begin{proof}
For every $\tau\in (0,T)$, by Definition \ref{01.01.D2}, we substitute $\varphi$ for $\psi$ and follow a similar argument as in the proof of Theorem 2.4 in \cite{CS3} or the proof of Lemma 3.7 in \cite{FF}. This yields
\begin{equation*}
\begin{split}
&\frac{1}{2}\int_\Omega \varphi(x,\tau)^2\,\mathrm{d}x-\frac{1}{2}\int_\Omega \varphi_0(x)^2\,\mathrm{d}x+\iint_{\Omega\times (0,\tau)} A\nabla \varphi\cdot \nabla \varphi\,\mathrm{d}x\,\mathrm{d}t=\iint_Q f\varphi\,\mathrm{d}x\,\mathrm{d}t.
\end{split}
\end{equation*}
Note that
\begin{equation*}
\iint_Q f\varphi\,\mathrm{d}x\,\mathrm{d}t\leq \left(\iint_Q f^2 w^{-1}\,\mathrm{d}x\,\mathrm{d}t\right)^{\frac{1}{2}}\left(\iint_Q \varphi^2 w\,\mathrm{d}x\,\mathrm{d}t\right)^{\frac{1}{2}}.
\end{equation*}
Then, by Lemma \ref{01.01.L2}, we obtain
\begin{equation*}
\begin{split}
\frac{1}{2}\int_\Omega \varphi(x,\tau)^2\,\mathrm{d}x+\frac{\Lambda}{2}\iint_Q |\nabla \varphi|^2 w\,\mathrm{d}x\,\mathrm{d}t\leq   \frac{1}{2}\int_\Omega \varphi_0(x)^2\,\mathrm{d}x+\frac{C_\Omega}{\Lambda}\iint_Q f^2 w^{-1}\,\mathrm{d}x\,\mathrm{d}t.
\end{split}
\end{equation*}
The existence of the solution to \eqref{01.01.E2} is established in \cite[Theorem 3.11]{FF}.
\end{proof}
 \section{Approximation}\label{S3}
In this section, we present an approximation method (Theorems \ref{01.01.T1} and \ref{01.01.T2}) for solving the equations \eqref{01.01.E1} and \eqref{01.01.E2} through shape design. This method is pivotal for establishing the Carleman estimate (Theorem \ref{01.03.T2}) in Section \ref{S4}.
We define
\begin{equation*}
\mc{O}(\Ga;\e)=\left\{x\in\Om\colon \dist(x,\Ga)>\e\right\},
\end{equation*}
where
\begin{equation*}
\dist(x,\Ga)=\inf_{y\in\Ga}|x-y|.
\end{equation*}
For every  $k\in\N$, we select a domain $\Om_k\in C^2$ such that $\mcO(\Ga,\f{1}{k})\s\Om_k\s \mc{O}(\Ga, \f{1}{k+1})$.

\subsection{Degenerate elliptic case}\label{3.S1}
Consider the equation
\begin{equation}\label{01.01.A1}
\begin{cases}
-\Div(A_k\nabla \vp_k)=f_k &\text{in } \Om_k,\\
\vp_k=0 &\text{on } \pt \Om_k,
\end{cases}
\end{equation}
where $A_k=A|_{\Om_k}$ is defined in \eqref{01.01.2}, and $f_k=f|_{\Om_k}\in L^2(\Om;w^{-1})$ is defined in \eqref{01.01.E1}. By \eqref{01.01.7}, \eqref{01.01.2}, and the definition of $\Om_k$, equation \eqref{01.01.A1} is uniformly elliptic.
\begin{definition}
A function $\vp_k\in H_0^1(\Om_k; w_k)=H_0^1(\Om_k)$ with $A_k=A|_{\Om_k}, w_k=w|_{\Om_k}$, and $f_k=f|_{\Om_k}$ is called a \textit{weak solution} to the elliptic equation \eqref{01.01.A1} if
\begin{equation*}
\int_{\Om_k} A_k\nabla\vp_k\cdot \nabla \psi\df x=\int_{\Om_k} f_k\psi\df x
\end{equation*}
for all $\psi\in C_0^\iy(\Om_k)$.
\end{definition}
\begin{lemma}\label{01.01.L5}
Define
\begin{equation*}
\be_k=\inf_{u_k\in H_0^1(\Om_k;w_k)\atop \|u_k\|_{L^2(\Om_k;w_k)}= 1}\int_{\Om_k} |\nabla u_k|^2w_k\df x.
\end{equation*}
Then, for all $k\in\N$, $\be_k\geq \Lambda C_\Om^{-2}$, where $C_\Om$ is the constant from Lemma \ref{01.01.L2}. Furthermore,
\begin{equation*}
\int_{\Om_k}u_k^2w_k\df x\leq C_\Om^2 \int_{\Om_k} |\nabla u_k|^2w_k\df x
\end{equation*}
for all $u_k\in H_0^1(\Om_k; w_k)$.
\end{lemma}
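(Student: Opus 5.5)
The plan is to transfer the weighted Poincaré inequality of Lemma \ref{01.01.L2}, with $p=2$ and $\theta=1$, from $\Om_k$ to $\Om$ by extension by zero. Since the constant $C_\Om$ there depends only on $\Om$ (and $w$), it will automatically be uniform in $k$.

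Fix $k$ and $u_k\in H_0^1(\Om_k;w_k)$. First I would treat $u_k\in C_0^\iy(\Om_k)$. Its extension by zero, $\tilde u_k$, lies in $C_0^\iy(\Om)\s H_0^1(\Om;w)$, and it satisfies $\nabla\tilde u_k=\nabla u_k$ in $\Om_k$ and $\nabla \tilde u_k=0$ outside $\Om_k$. Lemma \ref{01.01.L2} then gives
\begin{equation*}
\int_{\Om_k}u_k^2w_k\df x=\int_\Om \tilde u_k^2w\df x\leq C_\Om^2\int_\Om|\nabla\tilde u_k|^2w\df x=C_\Om^2\int_{\Om_k}|\nabla u_k|^2w_k\df x .
\end{equation*}
For a general $u_k\in H_0^1(\Om_k;w_k)$, I would take $C_0^\iy(\Om_k)$ functions converging to $u_k$ in the $H^1(\Om_k;w_k)$ norm and pass to the limit on both sides. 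This is possible because $H_0^1(\Om_k;w_k)$ is by definition the closure of $C_0^\iy(\Om_k)$, and both sides of the inequality are continuous in that norm. The one point to note is that on $\Om_k$ the weight is bounded above and below by positive constants, because $\overline{\Om_k}$ stays at positive distance from $\Ga$ and $w>0$ there. So $H_0^1(\Om_k;w_k)=H_0^1(\Om_k)$ as claimed, but the density argument does not actually need this. This gives the second assertion.

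For the bound on $\be_k$, any admissible $u_k$ with $\|u_k\|_{L^2(\Om_k;w_k)}=1$ satisfies $\int_{\Om_k}|\nabla u_k|^2w_k\df x\geq C_\Om^{-2}$. Taking the infimum gives $\be_k\geq C_\Om^{-2}$. If $\Lambda\le 1$, this already yields $\be_k\geq\Lambda C_\Om^{-2}$. If $\Lambda>1$, I would use \eqref{01.01.2}: the inequality $\Lambda w|\xi|^2\leq\Lambda^{-1}w|\xi|^2$ at a point where $w>0$ and $\xi\neq0$ forces $\Lambda\leq1$, so this case cannot occur.

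I expect no real obstacle. The only care needed is to check that extension by zero preserves the weak gradient, which is immediate for compactly supported smooth functions and then passes to the closure. The stated factor $\Lambda$ is presumably there so the bound can be used alongside $A$-energies later.
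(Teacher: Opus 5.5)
Your proof is correct. It uses the same core idea as the paper: extend by zero from $\Om_k$ into $H_0^1(\Om;w)$ and invoke the global constant of Lemma \ref{01.01.L2}. The execution differs in ways worth noting.

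The paper uses the positive lower bound of $w$ on $\Om_k$ to produce a minimizer $\vp_k\in H_0^1(\Om_k)$ of the Rayleigh quotient. That quotient is written there with the $A$-energy $\int_{\Om_k}A_k\nabla u\cdot\nabla u\,\df x$, which is where the factor $\Lambda$ enters via \eqref{01.01.2}. The paper then extends that single function by zero and compares with the corresponding infimum over $\Om$. The second assertion is left implicit.

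You instead extend every test function and pass to the closure. This gives the weighted Poincar\'e inequality on $\Om_k$ with constant $C_\Om^2$ first, and then $\be_k\geq C_\Om^{-2}$. You absorb $\Lambda$ through the valid observation that \eqref{01.01.2}, together with $w>0$ in $\Om$, forces $\Lambda\leq 1$.

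Your route has two advantages. It needs neither the existence of a minimizer nor any positive lower bound for $w$ on $\Om_k$. It also delivers the ``furthermore'' inequality with exactly the stated constant $C_\Om^2$. That constant does not follow from $\be_k\geq\Lambda C_\Om^{-2}$ alone, which would only give $\Lambda^{-1}C_\Om^2$.
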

\begin{proof}
{{Since $w_k\geq C_k>0$ on $\Om_k$ by \eqref{01.01.7}}}, there exists $\vp_k\in H_0^1(\Om_k;w_k)=H_0^1(\Om_k)$ such that
\begin{equation*}
\be_k=\int_{\Om_k} A_k\nabla \vp_k\cdot \nabla\vp_k\df x=\inf_{u_k\in H_0^1(\Om_k;w_k)\atop\|u_k\|_{L^2(\Om_k;w_k)}= 1}\int_{\Om_k} A_k\nabla u_k\cdot \nabla u_k\df x.
\end{equation*}
Define
\begin{equation*}
\wh \vp_k(x)=
\begin{cases}
\vp_k(x), &x\in \Om_k,\\
0, &x\in \Om\se \Om_k.
\end{cases}
\end{equation*}
Then, $\|\wh\vp_k\|_{L^2(\Om;w)}=1$ and $\wh\vp_k\in H_0^1(\Om;w)$ for $w_k=w$ on $\Om_k$. Thus,
\begin{equation*}
\be_k=\int_{\Om_k}A_k\nabla\vp_k\cdot\nabla\vp_k\df x{{=\int_\Om A\nabla \wh\vp_k\cdot\nabla\wh \vp_k\df x} \geq \inf_{u\in H_0^1(\Om;w)\atop \|u\|_{L^2(\Om;w)}=1}\int_\Om A\nabla u\cdot \nabla u\df x=\Lambda C_\Om^{-2}}.
\end{equation*}
This completes the proof of the lemma.
\end{proof}
\begin{theorem}\label{01.01.T1}
Let $\vp_k\ (k\in\N)$ be the solution of \eqref{01.01.A1} with $A_k=A|_{\Om_k}$ and $f_k=f|_{\Om_k}$, and let $\vp$ be the solution of \eqref{01.01.E1}. Then, the sequence $\{\vp_k\}$ satisfies
\begin{equation*}
\vp_k\ura \vp \text{ weakly in } H_0^1(\Om;w) \hbox{ and } \vp_k\ura \vp\text{ weakly in } L^2(\Om;w).
\end{equation*}
Moreover,
(i) if  $H_0^1(\Om;w)\hra L^2(\Om;w)$ is compact, then
\begin{equation*}
\vp_k\ra \vp \text{ strongly in } L^2(\Om;w).
\end{equation*}
(ii) alternatively, if there exists a nonempty open subset $\om\s \Om$ such that $\ol\om\s \Om$, then
\begin{equation*}
\vp_k\ra \vp \text{ strongly in }L^2(\om;w).
\end{equation*}
\end{theorem}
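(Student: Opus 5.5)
Extend each $\vp_k$ by zero to $\Om$; call the extension again $\vp_k$, as in the proof of Lemma \ref{01.01.L5}. Since $\vp_k\in H_0^1(\Om_k)$ and $\Om_k\s\Om$, the extension lies in $H_0^1(\Om;w)$: approximate $\vp_k$ by $C_0^\iy(\Om_k)$ functions, which are also in $C_0^\iy(\Om)$.

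\textbf{Step 1: uniform bound.} Test \eqref{01.01.A1} with $\vp_k$ itself, which is allowed by density in $H_0^1(\Om_k)$. Using \eqref{01.01.2}, the Cauchy--Schwarz inequality with weights $w^{-1/2}$ and $w^{1/2}$, and Lemma \ref{01.01.L5}, we get
\begin{equation*}
\Lambda\int_{\Om}|\nabla\vp_k|^2w\df x\leq \int_{\Om_k}A_k\nabla\vp_k\cdot\nabla\vp_k\df x=\int_{\Om_k}f\vp_k\df x\leq \|f\|_{L^2(\Om;w^{-1})}\, C_\Om\|\nabla\vp_k\|_{L^2(\Om;w)}.
\end{equation*}
Hence $\|\vp_k\|_{H_0^1(\Om;w)}\leq C\|f\|_{L^2(\Om;w^{-1})}$ uniformly in $k$. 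Since $H_0^1(\Om;w)$ is a Hilbert space, every subsequence has a further subsequence converging weakly to some $\vp^*\in H_0^1(\Om;w)$. Because $L^2(\Om;w)$ is continuously embedded, the convergence is also weak in $L^2(\Om;w)$.

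\textbf{Step 2: identify the limit.} Fix $\psi\in C_0^\iy(\Om)$. Its support is compact in $\Om$, but it may still approach $\pt\Om\se\ol\Ga$. This is where the nesting matters. Since $\Om_k\supseteq\mcO(\Ga,\f1k)$ and $\dist(\supp\psi,\Ga)>0$, we have $\supp\psi\s\Om_k$ for $k$ large. For such $k$, $\psi\in C_0^\iy(\Om_k)$, so
\begin{equation*}
\int_\Om A\nabla\vp_k\cdot\nabla\psi\df x=\int_\Om f\psi\df x.
\end{equation*}
The map $u\mapsto\int_\Om A\nabla u\cdot\nabla\psi\df x$ is a bounded linear functional on $H_0^1(\Om;w)$. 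Indeed, \eqref{01.01.2} implies $|A\xi\cdot\eta|\leq C w|\xi||\eta|$, since $A$ is symmetric-type bounded by $\Lambda^{-1}w$. One should check that the bilinear upper bound follows from the quadratic form bound; for symmetric $A$ this is Cauchy--Schwarz for the form, and for the general case I would assume $|a_{ij}|\leq Cw$. Passing to the limit gives that $\vp^*$ is a weak solution of \eqref{01.01.E1}. By uniqueness (Lemma \ref{01.01.L1}), $\vp^*=\vp$, so the whole sequence converges weakly. I expect the main technical point to be this step: the localization of test functions together with the bilinear bound for $A$.

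\textbf{Step 3: strong convergence.} For (i), a compact embedding maps weakly convergent sequences to strongly convergent ones, so $\vp_k\ra\vp$ in $L^2(\Om;w)$. For (ii), since $\ol\om\s\Om$ is compact, choose an open $\om'$ with $\ol\om\s\om'\s\ol{\om'}\s\Om$ and $C^1$ boundary. On $\om'$ the weight $w$ is continuous and positive, so bounded above and below. Thus the restrictions of the $\vp_k$ are bounded in $H^1(\om')$, and by Rellich's theorem they converge strongly in $L^2(\om')$ along a subsequence. The limit must be $\vp$ by Step 2, so the whole sequence converges. Since $w$ is bounded on $\om$, this gives $\vp_k\ra\vp$ in $L^2(\om;w)$.
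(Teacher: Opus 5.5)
Your proof is correct. Steps 1--2 and part (i) match the paper's argument: testing with $\vp_k$ and using Lemma~\ref{01.01.L5}, zero extension, weak compactness, and localizing a test function $\psi$ via $\supp\psi\s\mcO(\Ga,\frac{1}{k})\s\Om_k$ for large $k$. You also make explicit the subsequence-plus-uniqueness argument that gives convergence of the whole sequence, which the paper leaves implicit.

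The genuine difference is in (ii). The paper multiplies by a cutoff $\zeta$ and writes the uniformly elliptic equation satisfied by $\zeta\vp_k$ on an auxiliary domain $\wh\Om$. It then invokes the $H^2$ regularity estimate from Evans, which requires differentiating $A$ and hence $a_{ij}\in C^1$, and only afterwards applies Rellich--Kondrachov. You bypass the equation entirely: the uniform weighted bound from Step 1 together with $\min_{\ol{\om'}}w>0$ gives an unweighted $H^1(\om')$ bound, and Rellich applies at once. This is more economical and needs no regularity of $A$. In fact, the right-hand side of the paper's $H^2$ estimate already contains $\|\vp_k\|_{H^1(\wh\Om)}$, which is exactly the bound you use. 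What the paper's route buys in principle is a local $H^2$ bound, which would give the stronger conclusion $\vp_k\ra\vp$ in $H^1(\om)$, but that is not claimed.

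Finally, the hedge in your Step 2 (assuming $|a_{ij}|\leq Cw$) is unnecessary. For fixed $\psi\in C_0^\iy(\Om)$, the coefficients $a_{ij}$ are bounded and $w$ is bounded below on the compact set $\supp\psi$. Hence $u\mapsto\int_\Om A\nabla u\cdot\nabla\psi\,dx$ is bounded on $H_0^1(\Om;w)$ without any symmetry assumption. A bilinear bound up to the boundary is needed only in two places: testing with $\vp_k$ itself on $\Om_k$, and the Lax--Milgram uniqueness of Lemma~\ref{01.01.L1}. You and the paper both take these as given.
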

\begin{proof}
Since $\vp_k$ is a solution of \eqref{01.01.A1},  {{it has}}
\begin{equation*}
\int_{\Om_k}A_k\nabla\vp_k\cdot \nabla\psi\df x=\int_{\Om_k}f_k\psi\df x
\end{equation*}
for every  $\psi\in C_0^\iy(\Om_k)$. Substituting $\psi$ with $\vp_k$ and applying the condition \eqref{01.01.2} and the Cauchy-Schwarz inequality, we obtain
\begin{equation*}
\begin{split}
\La \int_{\Om_k} |\nabla \vp_k|^2w_k \df x
&\leq \int_{\Om_k}A_k\nabla\vp_k\cdot \nabla\vp_k\df x=\int_{\Om_k}f_k\vp_k\df x\\
&\leq \left(\int_{\Om_k} f_k^2 w_k^{-1}\df x\right)^\f{1}{2}\left(\int_{\Om_k}|\vp_k|^2w_k\df x\right)^\f{1}{2},
\end{split}
\end{equation*}
where $w_k=w|_{\Om_k}$. This implies, by Lemma \ref{01.01.L5},  that
\begin{equation*}
\int_{\Om_k}|\nabla \vp_k|^2w_k\df x\leq \La^{-2}C_\Om^{2}\int_{\Om_k} f_k^2w_k^{-1}\df x.
\end{equation*}
Define
\begin{equation*}
\wh\vp_k(x)=
\begin{cases}
\vp_k(x), &x\in \Om_k,\\
0, &x\in \Om\se \Om_k.
\end{cases}
\end{equation*}
Then, $\wh\vp_k\in H_0^1(\Om)$ and
\begin{equation*}
\int_\Om |\nabla \wh\vp_k|^2w\df x=\int_{\Om_k}|\nabla \vp_k|^2 w_k\df x\leq \La^{-2}C_\Om^{2}\int_{\Om_k}f_k^2 w_k^{-1}\df x\leq  \La^{-2}C_\Om^2\int_{\Om} f^2w^{-1}\df x.
\end{equation*}
This shows that $\{\wh\vp_k\}$ is a bounded sequence in $H_0^1(\Om;w)$. Hence, there exists a subsequence of $\{\wh\vp_k\}$, {{still denoted by the same notation}}, and $\wh \vp \in H_0^1(\Om;w)$ such that
\begin{equation}\label{01.01.4}
\wh\vp_k\ura \wh \vp \text{ weakly in } H_0^1(\Om;w) \hbox{ and } \wh \vp_k\ura \wh \vp \text{ weakly in } L^2(\Om; w).
\end{equation}
Next, we show that $\wh\vp$ is a solution of \eqref{01.01.E1}.
For every $\psi\in C_0^\iy(\Om)$, by the definition of $\Om_k$, there exists $k_0\in\N$ such that for all $k\geq k_0$, $\supp \psi\s \Om_k$.
By \eqref{01.01.4} and Lemma \ref{01.01.L3}. This implies
\begin{equation*}
\begin{split}
\int_{\Om}A\nabla\wh\vp\cdot\nabla \psi\df x
&=\lim_{k\ra\iy}\int_\Om A_k\nabla\wh\vp_k \cdot \nabla\psi\df x=\lim_{k\ra\iy}\int_{\Om_k} f_k\psi\df x=\int_{\Om} f\psi\df x.
\end{split}
\end{equation*}
  This means that $\wh\vp$ is a solution of \eqref{01.01.E1}.
Finally, if we further assume that $H_0^1(\Om;w)\hra L^2(\Om;w)$ is compact, then by \eqref{01.01.4},
\begin{equation*}
\wh\vp_k\ra \wh\vp \text{ strongly in } L^2(\Om;w),
\end{equation*}
{ which is the assertion (i). }

For the assertion (ii), assume there exists a nonempty open subset $\om\s\Om$ such that $\ol\om\s \Om$. Let $\de=\f{1}{4}\dist(\om,\pt\Om)$, and choose $\zeta\in C^\iy(\R^N)$ such that $0\leq \zeta\leq 1$, and
\begin{equation*}
\zeta=1 \text{ on } \mcO(\om,\de)=\{x\in\Om\colon \dist(x,\om)< \de\},\quad \zeta=0 \text{ on }\R^N\se \mcO(\om,2\de),
\end{equation*}
with $|\nabla \zeta|\leq \f{C}{\de}$ and $|\f{\pt^2\zeta}{\pt x_i\pt x_j} |\leq \f{C}{\de^2}\ (i,j=1,\cdots, N)$ on $\R^N$, {{where the constants $C>0$ are absolute and do not depend on any other constant}}. Choose $\wh\Om\in C^2$ such that $\mcO(\om,\de)\s \wh\Om\s \ol{\wh\Om}\s \mcO(\om,3\de)\s\Om$. Then, $\zeta \wh\vp_k$ (for sufficiently large $k$) is a solution of the equation
\begin{equation*}
\begin{cases}
-\Div(A\nabla (\zeta \wh\vp_k))=\zeta f-2A\nabla \zeta \cdot\nabla \wh\vp_k-\Div(A\nabla \zeta)\wh\vp_k &\text{in }\wh\Om, \\
\zeta\wh\vp_k=0 &\text{on }\pt \wh\Om.
\end{cases}
\end{equation*}
This equation is uniformly elliptic by \eqref{01.01.2} and \eqref{01.01.7}.
By   \cite[Theorem 1, Chapter 6.3.1, p. 309]{Evans}, we have
\begin{equation*}
\begin{split}
\|\zeta\wh\vp_k\|_{H^2(\wh\Om)}
&\leq C\left(\|\zeta f-2A\nabla \zeta \cdot \nabla\wh \vp_k-\Div(A\nabla \zeta)\wh\vp_k \|_{L^2(\wh\Om)}+\|\zeta \wh\vp_k\|_{L^2(\wh\Om)}\right)\\
&\leq \f{C}{\de^2}\left(\|f\|_{L^2(\wh\Om)}+\|\wh\vp_k\|_{H^1(\wh\Om)}\right)\leq C\|f\|_{L^2(\Om;w)},
\end{split}
\end{equation*}
where the constants depend only on $\La,  w, \om$, and $\Om$. Moreover, $\|\wh\vp_k\|_{H^2(\mcO(\om,\de))}\leq C\|f\|_{L^2(\Om;w)}$. By the Rellich-Kondrachov compactness theorem in Chapter 5.7 of \cite{Evans}, we obtain
\begin{equation*}
\wh\vp_k\ra \wt \vp \text{ strongly in } \mcO(\om,\de).
\end{equation*}
Combining this with $\vp_k\ura \vp$ weakly in $L^2(\Om;w)$, we get $\wt\vp=\vp$ on $\om$. This shows that assertion  (ii) holds.
\end{proof}

 \subsection{Degenerate parabolic case}\label{3.S2}
We consider the following equation:
\begin{equation}\label{01.01.A2}
\begin{cases}
\partial_t\varphi_k - \Div(A_k\nabla \varphi_k) = f_k &\text{in } Q_k = \Omega_k \times (0,T),\\
\varphi_k = 0 &\text{on } \partial Q_k,\\
\varphi_k(0) = \varphi_0|_{\Omega_k} &\text{in } \Omega_k,
\end{cases}
\end{equation}
where $f_k = f|_{\Omega_k}$, and $\varphi_0$ is defined in \eqref{01.01.E2}.
\begin{theorem}\label{01.01.T2}
Let $\varphi$ be the solution of \eqref{01.01.E2}, and $\varphi_k\ (k\in\mathbb{N})$ be the solution of \eqref{01.01.A2} with $f_k = f|_{\Omega_k}$ and $\varphi_k(0) = \varphi_0|_{\Omega_k}$. Then the sequence $\{\varphi_k\}$ satisfies
\begin{equation*}
\varphi_k \rightharpoonup \varphi \text{ weakly in } H_0^{1,2}(Q;w) \hbox{ and } \varphi_k \rightharpoonup \varphi \text{ weakly in } L^2(Q; w).
\end{equation*}
Furthermore, if there exists  a non-empty open subset $\omega \subset \Omega$ such that $\overline{\omega} \subset \Omega$, and $\varphi_0 \in H^1(\Omega)$, then,
\begin{equation*}
\varphi_k \to \varphi \text{ strongly in } L^2(\omega; w).
\end{equation*}
\end{theorem}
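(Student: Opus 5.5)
The plan mirrors the proof of Theorem \ref{01.01.T1}, with the energy estimate of Lemma \ref{01.01.L4} replacing the elliptic coercivity bound.

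\textbf{Uniform bounds.} On each $Q_k$ the problem \eqref{01.01.A2} is uniformly parabolic, because $w\geq C_k>0$ on $\Om_k$. It therefore has a unique weak solution $\vp_k\in L^2(0,T;H_0^1(\Om_k))$ with $\pt_t\vp_k\in L^2(0,T;H^{-1}(\Om_k))$. Testing with $\vp_k$ on $\Om_k\ts(0,\tau)$ and arguing as in the proof of Lemma \ref{01.01.L4} gives the energy identity. Lemma \ref{01.01.L5} supplies the weighted Poincar\'e inequality on $\Om_k$ with the $k$-independent constant $C_\Om$, so Young's inequality yields
\begin{equation*}
\sup_{t\in[0,T]}\int_{\Om_k}\vp_k^2\df x+\iint_{Q_k}|\nabla\vp_k|^2w\df x\df t\leq C\left(\|\vp_0\|_{L^2(\Om)}^2+\|f\|_{L^2(Q;w^{-1})}^2\right),
\end{equation*}
where $C$ depends only on $\La$ and $C_\Om$. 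Let $\wh\vp_k$ denote the extension of $\vp_k$ by zero to $Q$. Then $\{\wh\vp_k\}$ is bounded in $H_0^{1,2}(Q;w)$, with Lemma \ref{01.01.L5} controlling the $L^2(Q;w)$ part, and bounded in $L^\iy(0,T;L^2(\Om))$. We extract a subsequence converging weakly in $H_0^{1,2}(Q;w)$ and in $L^2(Q;w)$, and weakly-$*$ in $L^\iy(0,T;L^2(\Om))$, to some $\wh\vp$.

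\textbf{Identification of the limit.} Take a test function $\psi$ that is smooth, has $\supp\psi\s K\ts[0,T]$ with $K\Subset\Om$, and satisfies $\psi(T)=0$. For $k$ large, $K\s\Om_k$, so $\psi$ is admissible in the weak formulation of \eqref{01.01.A2}. This gives
\begin{equation*}
-\iint_Q\wh\vp_k\pt_t\psi+\iint_Q A\nabla\wh\vp_k\cdot\nabla\psi=\iint_Q f\psi+\int_\Om\vp_0\psi(\cdot,0).
\end{equation*}
To pass to the limit, write $A\nabla\psi=(w^{-1}A\nabla\psi)\,w$. The factor $w^{-1}A\nabla\psi$ lies in $L^2(Q;w)$ by \eqref{01.01.2}, so weak convergence of $\nabla\wh\vp_k$ in $L^2(Q;w)$ applies. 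The time-derivative term converges by the weak-$*$ convergence in $L^2$. Hence $\wh\vp$ satisfies Definition \ref{01.01.D2} for such $\psi$. Since the $H_0^{1,2}(Q;w)$ norm is the closure norm, a density argument extends this to all $\psi\in W$ with $\psi(T)=0$. Uniqueness of the weak solution (apply the energy identity to the difference of two solutions) gives $\wh\vp=\vp$, and then the whole sequence converges.

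\textbf{Interior strong convergence.} Take $\om$ with $\ol\om\s\Om$ and choose $\zeta$, $\de$, $\wh\Om$ as in the proof of Theorem \ref{01.01.T1}. For $k$ large, $\zeta\wh\vp_k$ solves a uniformly parabolic problem on $\wh\Om\ts(0,T)$ with zero lateral boundary data. Its right-hand side is $\zeta f-2A\nabla\zeta\cdot\nabla\wh\vp_k-\Div(A\nabla\zeta)\wh\vp_k$ and its initial datum is $\zeta\vp_0\in H_0^1(\wh\Om)$. The hypothesis $\vp_0\in H^1(\Om)$ is used exactly here. The right-hand side is bounded in $L^2$ uniformly in $k$: on $\wh\Om$ the weight $w$ is bounded above and below, so $\nabla\wh\vp_k$ is bounded in $L^2(\wh\Om\ts(0,T))$ by the uniform bound above. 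Standard parabolic regularity (Evans, \S7.1.3, Theorem 5) then gives a uniform bound
\begin{equation*}
\|\zeta\wh\vp_k\|_{L^2(0,T;H^2(\wh\Om))}+\|\pt_t(\zeta\wh\vp_k)\|_{L^2(\wh\Om\ts(0,T))}\leq C.
\end{equation*}
By the Aubin--Lions lemma, $\zeta\wh\vp_k$ is precompact in $L^2(0,T;H^1(\wh\Om))$. Every limit point must coincide with $\vp$ on $\om\ts(0,T)$ because of the weak convergence already established, so $\vp_k\to\vp$ strongly in $L^2(\om\ts(0,T))$. Since $w$ is bounded on $\ol\om$, the convergence also holds in the weighted space.

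\textbf{Main obstacle.} I expect the hardest step to be the interior regularity. It requires the coefficients $a_{ij}$ to be $C^1$ up to $\ol{\wh\Om}$ (true, since $\ol{\wh\Om}\s\Om$), and it needs the uniform-in-$k$ control of the commutator terms. The uniqueness and density step behind the limit identification is more delicate than in the elliptic case, but I would simply invoke \cite{FF} for it.
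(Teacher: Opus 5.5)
Your proposal is correct and follows essentially the same route as the paper. Both arguments run through a $k$-uniform energy bound from the weighted Poincar\'e inequality on $\Omega_k$, zero extension and weak compactness in $H_0^{1,2}(Q;w)$, identification of the limit with test functions supported in $K\times[0,T]$ for $K$ compact in $\Omega$, and an interior cutoff $\zeta$ combined with uniformly parabolic regularity and compactness to get strong convergence on $\omega$. The differences are minor improvements: you invoke Evans's improved parabolic regularity theorem where the paper bounds $\partial_t(\zeta\varphi_k)$ by hand with the multiplier $\zeta\partial_t\varphi_k$, and you use uniqueness to pass from a subsequence to the whole sequence, a step the paper leaves implicit.
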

\begin{proof}
Since $\varphi_k$ is a solution of \eqref{01.01.A2} for every  $k\in\mathbb{N}$, by Lemma \ref{01.01.L2}, for any $\tau\in (0,T)$, we obtain
\begin{equation*}
\begin{split}
&\frac{1}{2}\int_{\Omega_k}\varphi_k(x,\tau)^2\,dx + \Lambda\iint_{Q_k}|\nabla\varphi_k|^2w_k\,dx\,dt\\
&\leq\frac{1}{2}\int_{\Omega_k}\varphi_k(x,\tau)^2\,dx + \iint_{Q_k}A_k\nabla\varphi_k\cdot \nabla \varphi_k\,dx\,dt = \frac{1}{2}\int_{\Omega_k}\varphi_0^2\,dx + \iint_{Q_k} f_k\varphi_k\,dx\,dt\\
&\leq \frac{1}{2}\int_{\Omega_k}\varphi_0^2\,dx + \frac{1}{2\varepsilon} \iint_{Q_k} f_k^2w_k^{-1}\,dx\,dt + \varepsilon\iint_{Q_k} \varphi_k^2w_k\,dx\,dt\\
&\leq \frac{1}{2}\int_{\Omega_k}\varphi_0^2\,dx + \frac{1}{2\varepsilon} \iint_{Q_k} f_k^2w_k^{-1}\,dx\,dt + \varepsilon C_\Omega^2\iint_{Q_k} |\nabla \varphi_k|^2w_k\,dx\,dt,
\end{split}
\end{equation*}
where $A_k = A|_{\Omega_k}$, $f_k = f|_{Q_k}$, and $w_k = w|_{\Omega_k}$. Choosing $\varepsilon = \frac{\Lambda C_\Omega^{-2}}{2}$, we get
\begin{equation}\label{01.01.6}
\begin{split}
&\frac{1}{2}\int_{\Omega_k}\varphi_k(x,\tau)^2\,dx + \frac{\Lambda}{2}\iint_{\Omega_k \times (0,\tau)}|\nabla\varphi_k|^2w_k\,dx\,dt\\
&\leq \frac{1}{2}\int_{\Omega_k}\varphi_0^2\,dx + \Lambda^{-1} C_\Omega^2\iint_{\Omega_k \times (0,\tau)}f_k^2w_k^{-1}\,dx\,dt \leq \frac{1}{2}\int_\Omega \varphi_0^2\,dx + \Lambda^{-1} C_\Omega^2\iint_Q f^2w^{-1}\,dx\,dt
\end{split}
\end{equation}
for all $\tau\in (0,T)$.
Moreover, we have
\begin{equation*}
\iint_{\Omega_k \times (0,T)}\varphi_k (x,t)^2\,dx\,dt \leq T\left(\int_\Omega\varphi_0^2\,dx + 2\Lambda^{-1} C_\Omega^2\iint_Q f^2w^{-1}\,dx\,dt\right).
\end{equation*}
Define
\begin{equation*}
\widetilde{\varphi}_k(x,t)=
\begin{cases}
\varphi_k(x,t), &(x,t)\in Q_k,\\
0, &(x,t)\in Q\setminus Q_k,
\end{cases}
\end{equation*}
Then, $\widetilde{\varphi}_k\in H_0^{1,2}(Q; w)$, and hence there exists a subsequence of $\{\widetilde{\varphi}_k\}$, {{still denoted by the same notation}}, and $\widetilde{\varphi}\in H_0^{1,2}(Q; w)$ such that
\begin{equation}\label{01.03.8}
\widetilde{\varphi}_k \rightharpoonup \widetilde{\varphi} \text{ weakly in } H_0^{1,2}(Q; w) \hbox{ and } \widetilde{\varphi}_k \rightharpoonup \widetilde{\varphi} \text{ weakly in } L^2(Q; w)\cap L^2(Q).
\end{equation}

Next, we show that $\widetilde{\varphi}$ is a solution of \eqref{01.01.E2}, i.e., $\widetilde{\varphi}=\varphi$ on $Q$.

Suppose $\psi\in W$, $\psi(T)=0$. By the density argument (see the proof  of \cite[Theorem 2.3]{CS3}), we can assume $\psi\in C^\infty(\overline{Q})$ and $\psi(\cdot, t)$ is compactly supported in $\Omega$ for any $t\in [0,T]$. By the definition of $\Omega_k$, $k\in\mathbb{N}$, there exists $k_0\in\mathbb{N}$ such that $\supp \psi \subset Q_k$ for all $k\geq k_0$. Note that we have
\begin{equation*}
\begin{split}
-\iint_Q \widetilde{\varphi} \psi_t\,dx\,dt + \iint_Q A\nabla \widetilde{\varphi} \cdot \nabla \psi\,dx\,dt
&=
\lim_{k\to\infty}\left(-\iint_{Q}\widetilde{\varphi}_k\psi_t\,dx\,dt + \iint_{Q}A_k\nabla\widetilde{\varphi}_k \cdot \nabla\psi\,dx\,dt\right)\\
&=\lim_{k\to\infty}\left(\iint_{Q_k}f_k\psi\,dx\,dt + \int_{\Omega_k}\varphi_0(x)\psi(x,0)\,dx\right)\\
&=\iint_Q f\psi\,dx\,dt + \int_\Omega \varphi_0(x)\psi(x,0)\,dx
\end{split}
\end{equation*}
by \eqref{01.03.8}.
This implies that $\widetilde{\varphi}\in H_0^{1,2}(Q;w)$ is a solution of \eqref{01.01.E2}.

Finally, since $\overline{\omega} \subset \Omega$ is compact, we denote $\delta = \frac{1}{4}\dist(\overline{\omega},\partial\Omega)$. Choosing $\zeta\in C^\infty(\mathbb{R}^N)$, $0\leq \zeta\leq 1$ satisfying
\begin{equation*}
\zeta = 1 \text{ on } \mathcal{O}(\omega,\delta)=\{x\in\Omega\colon \dist(x,\omega)< \delta\},\quad \zeta = 0 \text{ on } \Omega\setminus \mathcal{O}(\omega,2\delta),
\end{equation*}
and
\begin{equation*}
|\nabla\zeta|\leq \frac{C}{\delta} \text{ and } \left|\frac{\partial^2\zeta}{\partial x_i\partial x_j}\right|\leq \frac{C}{\delta^2} \text{ on } \mathbb{R}^N,
\end{equation*}
where the constants $C>0$ are absolute constants. Denote $\widehat{\Omega}\in C^2$ satisfying $\supp \zeta \subset \widehat{\Omega} \subset \overline{\widehat{\Omega}} \subset \mathcal{O}(\omega,3\delta) \subset \Omega$.

By the definition of $\Omega_k$, there exists $k_0\in\mathbb{N}$ such that for all $k\geq k_0$, it has  $\widehat{\Omega} \subset \Omega_k$. Hence $\zeta\varphi_k$ is a solution of the following equation
\begin{equation}\label{01.01.5}
\begin{cases}
\partial_t(\zeta\varphi_k) - \Div(A\nabla(\zeta\varphi_k))
= \zeta f - 2A \nabla\zeta\cdot \nabla\varphi_k - \varphi_k\Div(A\nabla \zeta), &\text{in } \widehat{\Omega} \times (0,T),\\
\zeta\varphi_k = 0, &\text{on } \partial \widehat{\Omega} \times (0,T),\\
\zeta\varphi_k(0) = \zeta\varphi_0, &\text{in } \widehat{\Omega}.
\end{cases}
\end{equation}
Note that $A\in C^1(\overline{\widehat{\Omega}})$. Then,  $\Div(A\nabla\zeta)\in C(\overline{\widehat{\Omega}})$, and hence \eqref{01.01.5} is a uniformly parabolic equation.
Multiplying $\zeta\partial_t\varphi_k$ on both sides of \eqref{01.01.5}, and integrating on $\widehat{\Omega} \times (0,\tau)$, $\tau\in (0,T)$, we have
\begin{equation*}
\begin{split}
&\iint_{\widehat{\Omega} \times (0,\tau)}|\partial_t(\zeta\varphi_k)|^2\,dx\,dt + \frac{1}{2}\int_{\widehat{\Omega}}A\nabla (\zeta \varphi_k(\tau))\cdot \nabla (\zeta \varphi_k(\tau))\,dx\\
&=\frac{1}{2}\int_{\widehat{\Omega}} A\nabla(\zeta \varphi_0)\cdot \nabla (\zeta \varphi_0)\,dx + \iint_{\widehat{\Omega}\times (0,\tau)} \zeta^2f \partial_t\varphi_k\,dx\,dt\\
&\hspace{4.5mm}-2\iint_{\widehat{\Omega}\times (0,\tau)}(A\nabla\zeta \cdot \nabla \varphi_k)(\zeta\partial_t\varphi_k)\,dx\,dt - \iint_{\widehat{\Omega}\times (0,T)}\varphi_k \Div(A\nabla \zeta)(\zeta\partial_t\varphi_k)\,dx\,dt\\
&\leq \frac{1}{2}\int_\Omega A\nabla (\zeta \varphi_0)\cdot\nabla (\zeta \varphi_0)\,dx + \frac{1}{2\varepsilon}\iint_{\widehat{\Omega} \times (0,T)} \zeta^2f^2w_k^{-1}\,dx\,dt + \varepsilon\iint_{\widehat{\Omega}\times (0,T)}\zeta^2(\partial_t \varphi_k)^2w_k\,dx\,dt\\
&\hspace{4.5mm}+\frac{1}{\varepsilon}\iint_{\widehat{\Omega}\times (0,T)}|A\nabla\zeta\cdot \nabla\varphi_k|^2 \,dx\,dt + \varepsilon\iint_{\widehat{\Omega}\times (0,T)} \zeta^2 (\partial_t  \varphi_k)^2\,dx\,dt + \frac{1}{2\varepsilon}\iint_{\widehat{\Omega}\times (0,T)}|\Div(A\nabla\zeta)|^2 \varphi_k^2\,dx\,dt\\
&\leq \frac{C}{\delta^2}\|\varphi_0\|_{H^1(\Omega)}^2 + C\iint_Q f^2w^{-1}\,dx\,dt + C\iint_{Q_k} |\nabla\widetilde{\varphi}_k|^2w_k\,dx\,dt\\
&\leq C\left(\|\varphi_0\|_{H^1(\Omega)}^2 + \iint_Q f^2w^{-1}\,dx\,dt\right)
\end{split}
\end{equation*}
by taking $\varepsilon = \min\{\frac{1}{4\gamma},\frac{1}{4}\}$ with $\gamma = \inf_{x\in \widehat{\Omega}}w(x)$ and Lemma \ref{01.01.L2}, and the constants $C>0$ depend only on $\delta,\Lambda, C_\Omega, \gamma$ and $\Omega$.
This implies that $\{\zeta\varphi_k\}$ is a bounded sequence in
\begin{equation*}
\widehat{W}:=\left\{u\in L^2(\widehat{\Omega} \times (0,T))\colon \partial_t u\in L^2(\widehat{\Omega} \times (0,T)), u\in L^2(0,T; H_0^1(\widehat{\Omega})\cap H^2(\widehat{\Omega}))\right\}.
\end{equation*}
Since $\widehat{W} \hookrightarrow L^2(\widehat{\Omega} \times (0,T))$ is compact, there exists a subsequence of $\{\varphi_k\}$, {{still denoted by the same notation}}, and $\widetilde{\vp}\in \widehat{W}$ such that
\begin{equation*}
\zeta \varphi_k \to \widetilde{\vp} \text{ strongly in } L^2(\widehat{\Omega} \times (0,T)).
\end{equation*}
Noting that  $\widetilde{\varphi}_k \rightharpoonup \varphi$ weakly in $L^2(Q;w)\cap L^2(Q)$ and $\zeta\varphi_k = \varphi_k = \widetilde{\varphi}_k$ on $\mathcal{O}(\omega,\delta) \times (0,T) \subset Q$, we get $\widetilde{\vp} = \varphi = \widetilde{\varphi}$ on $\mathcal{O}(\omega,\delta)$.
 This completes the proof of the theorem.
\end{proof}

 \section{Application to 1-d degenerate parabolic equation}\label{S4}
In this section, we aim to derive the Carleman estimate (Theorem \ref{01.03.T2}) for the backward degenerate parabolic equation \eqref{01.02.1}. The weight function \eqref{05.06.1} employed for this estimate differs slightly from the weight (3.2) presented in \cite[p.175]{Alabau}. By utilizing this weight function, we can circumvent the need for boundary estimates. To establish the Carleman estimate, we partition the estimation process into three distinct parts. By integrating these three estimates, we ultimately obtain the desired Carleman estimate. Finally, we employ an approximation method to prove Theorem \ref{01.03.T2}. The advantage of this approximation method lies in its independence from the second partial derivatives of equation \eqref{01.02.1}. Given the challenges in defining second derivatives for equations \eqref{01.01.E1} and \eqref{01.01.E2}, particularly in higher-dimensional degenerate partial differential equations, this method proves particularly advantageous.
We focus on the case where $N=1$ and the weight function is $w=x^\alpha$, with $\alpha\in (0,1)$. The investigation of the case where $N=1$ and $w=x^\alpha$, with $\alpha\in (1,2)$ (or $N\geq 2$) will be addressed in a separate study.
We consider the following degenerate parabolic equation:
\begin{equation}\label{01.02.1}
\begin{cases}
\partial_t \varphi+\partial_x(w \partial_x \varphi)=g & \text{in } Q, \\
\varphi=0 &\text{on }\partial Q, \\
\varphi(T)=\varphi_T &\text{in }\Omega,
\end{cases}
\end{equation}
where $Q=\Omega\times(0,T)$, $\Omega=(0,1)$, $w=x^\alpha$, $\alpha\in (0,1)$, $g\in L^2(\Omega;w^{-1})$, and $\varphi_T\in L^2(\Omega)$. We observe that, from
\begin{equation*}
\iint_Q g^2\,dx\,dt=\iint_Q g^2w^{-1}w\,dx\,dt\leq \iint_Q g^2w^{-1}\,dx\,dt,
\end{equation*}
it follows that $g\in L^2(Q)$.
To derive the Carleman estimate (Theorem \ref{01.03.T2}) for equation \eqref{01.02.1}, we rely on Theorem \ref{01.01.T2}. Consequently, we first need to establish the Carleman estimate (Theorem \ref{01.03.T1}) for the following equation \eqref{01.02.2}.
Let $\varphi_k$ ($k\in\mathbb{N}$) denote the solution of the equation:
\begin{equation}\label{01.02.2}
\begin{cases}
\partial_t\varphi_k+\partial_x(w\partial_x\varphi_k)=g_k &\text{in }Q_k,\\
\varphi_k=0 &\text{on } \partial Q_k,\\
\varphi_k(T)=\varphi_T|_{\Omega_k} &\text{in }\Omega_k,
\end{cases}
\end{equation}
where $\Omega_k=(\frac{1}{k}, 1)$, $Q_k=\Omega_k\times (0,T)$, and $g_k=g|_{\Omega_k}$. We note that \eqref{01.02.2} represents a uniformly parabolic equation for any $k\in\mathbb{N}$.
We assume $\omega=(a,b)\subset \Omega$ with $0<a<b<1$. We select $\eta\in C(\overline{\Omega})\cap C^3(\Omega)$ satisfying
\begin{equation}\label{05.06.1}
\eta(x)=
\begin{cases}
x^{2-\alpha} &\text{for } x\in (0, \frac{2a+b}{3}),\\
>0 &\text{for }x\in (\frac{2a+b}{3}, \frac{a+2b}{3}), \\
(1-x)x^{-\alpha} &\text{for }x\in (\frac{a+2b}{3}, 1).
\end{cases}
\end{equation}
We define
\begin{equation}\label{gbz10}
\Theta=\frac{1}{[t(T-t)]^4},\quad \xi=\gamma\Theta[-2|\eta|_\infty+\eta],
\end{equation}
where  $|\eta|_\infty=\sup_{x\in \Omega}|\eta(x)|$ and $\gamma>0$  is a constant to be specified later. It is straightforward to verify that
\begin{equation*}
\partial_x\xi=\gamma \Theta (\partial_x\eta).
\end{equation*}
Let
\begin{equation}\label{01.03.4}
v_k=e^{s\xi}\varphi_k\ \Leftrightarrow{}\ \varphi_k=e^{-s\xi}v_k.
\end{equation}
Then, we have the following properties (noting that $\xi$ is independent of $k\in\mathbb{N}$):

i) $v_k=\partial_xv_k=0$ at $t=0$ and $t=T$;

ii) $v_k=0$ on $\partial Q_k$.

By direct computation, we obtain
\begin{equation*}
e^{s\xi}g=P_1v_k+P_2v_k,
\end{equation*}
where
\begin{equation}\label{gbz1}
\begin{split}
P_1v_k
&=\sum_{i=1}^3P_{1i}v_k=\partial_t v_k-2sw (\partial_x v_k)(\partial_x\xi)-sv_k\partial_x(w\partial_x\xi),\\
P_2v_k
&=\sum_{i=1}^3P_{2i}v_k=\partial_x(w\partial_xv_k)-sv_k\partial_t\xi+s^2v_kw(\partial_x\xi)^2.
\end{split}
\end{equation}

 We now proceed to compute $(P_1v_k,P_2v_k)_{L^2(Q_k)}$ term-by-term as presented in \dref{gbz1}. It is important to note that \eqref{01.02.2} represents a uniform parabolic equation, thereby ensuring the meaningfulness of the second derivatives involved.
 This is split into several steps.

{\it Step 1: Computation of $(P_{11}v_k,P_{21}v_k)_{L^2(Q_k)}$.}

Given i) and the condition $\pt_tv_k=0$ on $\pt Q_k$, we derive:
\begin{equation}\label{gbz2}
\begin{split}
&(P_{11}v_k,P_{21}v_k)_{L^2(Q_k)}\\
&=\iint_{Q_k}(\pt_tv_k)\pt_x(w\pt_xv_k)\,dx\,dt=-\iint_{Q_k}w(\pt_xv_k)\pt_x(\pt_tv_k)\,dx\,dt\\
&=-\frac{1}{2}\iint_{Q_k}\pt_t\left[w(\pt_xv_k)^2\right]\,dx\,dt=0.
\end{split}
\end{equation}

{\it Step 2:  Computation of $(P_{12}v_k,P_{21}v_k)_{L^2(Q_k)}$.}

We calculate as follows:
\begin{equation}\label{gbz3}
\begin{split}
&(P_{12}v_k,P_{21}v_k)_{L^2(Q_k)}\\
&=-2s\iint_{Q_k} w(\pt_xv_k)(\pt_x\xi)\pt_x(w\pt_xv_k)\,dx\,dt\\
&={{2s\int_0^T w^2(k^{-1})(\pt_x\xi)(k^{-1},t)(\pt_xv_k)^2(k^{-1},t)\,dt-2s\int_0^T w^2(1)(\pt_x\xi)(1,t)(\pt_xv_k)^2(1,t)\,dt}}\\
&\quad+2s\iint_{Q_k} w(\pt_xv_k)\pt_x\big[w(\pt_xv_k)(\pt_x\xi)\big]\,dx\,dt\\
&={{s\int_0^T w^2(k^{-1})(\pt_x\xi)(k^{-1},t)(\pt_xv_k)^2(k^{-1},t)\,dt-s\int_0^T w^2(1)(\pt_x\xi)(1,t)(\pt_xv_k)^2(1,t)\,dt}}\\
&\quad+s\iint_{Q_k} w^2(\pt_xv_k)^2(\pt_x^2\xi)\,dx\,dt.
\end{split}
\end{equation}

{\it Step 3: Computation of $(P_{13}v_k,P_{21}v_k)_{L^2(Q_k)}$.}

Using ii), we obtain:
\begin{equation}\label{gbz4}
\begin{split}
(P_{13}v_k,P_{21}v_k)_{L^2(Q_k)}
&=-s\iint_{Q_k} v_k\big[\pt_x(w\pt_x\xi)\big]\pt_x(w\pt_xv_k)\,dx\,dt\\
&=s\iint_{Q_k}w(\pt_xv_k)^2\pt_x(w\pt_x\xi)\,dx\,dt+s\iint_{Q_k}v_kw(\pt_xv_k)\pt_x^2(w\pt_x\xi)\,dx\,dt.
\end{split}
\end{equation}

{\it Step 4: Computation of $(P_1v_k,P_{22}v_k)_{L^2(Q_k)}$.}

Combining i) and ii), we have:
\begin{equation}\label{gbz5}
\begin{split}
&(P_1v_k,P_{22}v_k)_{L^2(Q_k)}\\
&=\frac{s}{2}\iint_{Q_k}v_k^2\pt_{t}^2\xi\,dx\,dt-s^2\iint_{Q_k}v_k^2\pt_x\big[w(\pt_t\xi)(\pt_x\xi)\big]\,dx\,dt+s^2\iint_{Q_k}v_k^2(\pt_t\xi)\pt_x(w\pt_x\xi)\,dx\,dt\\
&=\frac{s}{2}\iint_{Q_k}v_k^2\pt_t^2\xi\,dx\,dt-s^2\iint_{Q_k}v_k^2w(\pt_x\xi)\pt_{xt}\xi\,dx\,dt.
\end{split}
\end{equation}

{\it Step 5: Computation of $(P_{11}v_k, P_{23}v_k)_{L^2(Q_k)}$.}

From i), we derive
\begin{equation}\label{gbz6}
\begin{split}
(P_{11}v_k, P_{23}v_k)_{L^2(Q_k)}
&=s^2\iint_{Q_k}v_kw(\pt_x\xi)^2 \pt_tv_k\,dx\,dt=\frac{s^2}{2}\iint_{Q_k}w(\pt_x\xi)^2\pt_tv_k^2\,dx\,dt\\
&=-s^2\iint_{Q_k} v_k^2 w(\pt_x\xi)\pt_{xt}\xi\,dx\,dt.
\end{split}
\end{equation}

{\it Step 6: Computation of $(P_{12}v_k+P_{13}v_k, P_{23}v_k)_{L^2(Q_k)}$.}

We obtain
\begin{equation}\label{gbz7}
\begin{split}
&(P_{12}v_k+P_{13}v_k, P_{23}v_k)_{L^2(Q_k)}\\
&=-2s^3\iint_{Q_k}v_kw^2(\pt_xv_k)(\pt_x\xi)^3\,dx\,dt-s^3\iint_{Q_k}v_k^2 w(\pt_x\xi)^2\pt_x(w\pt_x\xi)\,dx\,dt\\
&=s^3\iint_{Q_k}v_k^2w(\pt_x\xi)\pt_x\big[w(\pt_x\xi)^2\big]\,dx\,dt.
\end{split}
\end{equation}

 \subsection{Estimation}
Let us define the following regions:
\begin{equation*}
Q_k^1=\left(0,\frac{2a + b}{3}\right) \times (0,T), \quad Q_k^3=\left(\frac{a + 2b}{3},1\right) \times (0,T), \quad Q_k^2 = Q_k \setminus (Q_k^1 \cup Q_k^3).
\end{equation*}

The estimation is split into several steps.

{\it Step 1.} It can be readily verified that the first order partial derivative of $\xi$  defined by \dref{gbz10} with respect to $x$ is given by
\begin{equation*}
\partial_x\xi=
\begin{cases}
(2 - \alpha)\gamma\Theta x^{1 - \alpha}, &x\in Q_k^1,\\
\gamma\Theta\left[-x^{-\alpha}+(-\alpha)(1 - x)x^{-\alpha - 1}\right], &x\in Q_k^3,
\end{cases}
\end{equation*}
the second  order partial derivative of $\xi$ with respect to $x$ is
\begin{equation*}
\partial_x^2\xi=
\begin{cases}
(2 - \alpha)(1 - \alpha)\gamma \Theta x^{-\alpha}, &x\in Q_k^1,\\
\gamma\Theta\left[2\alpha x^{-\alpha - 1}+\alpha (\alpha + 1)(1 - x)x^{-\alpha - 2}\right], &x\in Q_k^3,
\end{cases}
\end{equation*}
the partial derivative of $w\partial_x\xi$ with respect to $x$ is
\begin{equation*}
\partial_x(w\partial_x\xi)=
\begin{cases}
(2 - \alpha)\gamma\Theta, &x\in Q_k^1,\\
\gamma\Theta\left[\alpha x^{-1}+\alpha(1 - x)x^{-2}\right], &x\in Q_k^3,
\end{cases}
\end{equation*}
and the second  order partial derivative of $w\partial_x\xi$ with respect to $x$ is
\begin{equation*}
\partial_x^2(w\partial_x\xi)=
\begin{cases}
0, &x\in Q_k^1,\\
\gamma\Theta\left[-2\alpha x^{-3}\right], &x\in Q_k^3.
\end{cases}
\end{equation*}
Combining these results with $\partial_x\xi(1,t)=-\gamma\Theta$, we obtain
\begin{equation*}
\begin{split}
E_1(Q_k)
&=(P_{1}v_k,P_{21}v_k)_{L^2(Q_k)}\\
&\geq (2 - \alpha)^2s\gamma \iint_{Q_k^1}\Theta x^{\alpha}(\partial_xv_k)^2\mathrm{d}x\mathrm{d}t + Cs\gamma\iint_{Q_k^3} \Theta (\partial_x v_k)^2\mathrm{d}x\mathrm{d}t\\
&\quad - Cs\gamma\iint_{Q_k^2} \Theta (\partial_xv_k)^2\mathrm{d}x\mathrm{d}t - Cs\gamma\iint_{Q_k^2\cup Q_k^3}\Theta |v_k\partial_xv_k|\mathrm{d}x\mathrm{d}t\\
&\geq (2 - \alpha)^2s\gamma \iint_{Q_k^1}\Theta x^{\alpha}(\partial_xv_k)^2\mathrm{d}x\mathrm{d}t + Cs\gamma\iint_{Q_k^3} \Theta (\partial_x v_k)^2\mathrm{d}x\mathrm{d}t\\
&\quad - Cs\gamma\iint_{Q_k^2} \Theta (\partial_xv_k)^2\mathrm{d}x\mathrm{d}t - Cs\iint_{Q_k^2\cup  Q_k^3}\Theta (\partial_xv_k)^2\mathrm{d}x\mathrm{d}t - Cs\gamma^2\iint_{Q_k^2\cup Q_k^3}\Theta v_k^2\mathrm{d}x\mathrm{d}t,
\end{split}
\end{equation*}
where the positive constants $C$ depend solely on $\alpha, a$, and $b$. { Here it is noted that the term with the coefficient $(2-\alpha)^2$ originates from two components: the final term in equation \dref{gbz3} and the initial term in the last equality of equation
\dref{gbz4}. Specifically,
\begin{equation*}
	s\iint_{Q_k^1} w^2(\partial_xv_k)^2(\partial_x^2\xi)\,dx\,dt = (2-\alpha)(1-\alpha)s\gamma \iint_{Q_k^1} \Theta x^\alpha (\partial_xv_k)^2\,dx\,dt,
\end{equation*}
and
\begin{equation*}
	s\iint_{Q_k}w(\partial_xv_k)^2\partial_x(w\partial_x\xi)\,dx\,dt = (2-\alpha)s\gamma\iint_{Q_k^1}\Theta x^\alpha (\partial_xv_k)^2\,dx\,dt.
\end{equation*}}

{\it Step 2.}  Given that
\begin{equation*}
\partial_t\xi=\gamma\Theta' \left[-2|\eta|_{\infty}+\eta\right],\quad \partial_t^2\xi=\gamma\Theta''\left[-2|\eta|_{\infty}+\eta\right],\quad |\Theta'|\leq C\Theta^{\frac{5}{4}},\quad |\Theta''|\leq C\Theta^{\frac{3}{2}},
\end{equation*}
and
\begin{equation*}
w(\partial_x\xi)(\partial_{xt}\xi)=
\begin{cases}
(2 - \alpha)^2\gamma^2\Theta\Theta' x^{2 - \alpha}, &x\in Q_k^1,\\
\leq C\gamma^2|\Theta \Theta'|, & x\in Q_k^3,
\end{cases}
\end{equation*}
we can derive
\begin{equation*}
\begin{split}
E_2(Q_k)
&=(P_1v_k,P_{22}v_k)_{L^2(Q_k)}+(P_{11}v_k, P_{23}v_k)_{L^2(Q_k)}\\
&\geq - Cs\gamma \iint_{Q_k^1}\Theta^{\frac{3}{2}}v_k^2\mathrm{d}x\mathrm{d}t - Cs^2\gamma^2\iint_{Q_k^1} \Theta^{\frac{9}{4}}v_k^2x^{2 - \alpha}\mathrm{d}x\mathrm{d}t - Cs^2\gamma^2\iint_{Q_k^2\cup Q_k^3} \Theta^{\frac{9}{4}}v_k^2 \mathrm{d}x\mathrm{d}t.
\end{split}
\end{equation*}
Here, the positive constants $C$ depend on $\alpha, a, b$, and $T$.

{\it Step 3. }  {  Given that  $\partial_x\xi=\gamma\Theta(-x^{-\alpha-1}[x+\alpha(1-x)])$ for $x\in Q_k^3$, we can obtain
$w\partial_x\xi=\gamma\Theta (-x^{-1}[x+\alpha(1-x)])$. Moreover,
\begin{equation*}
	(\partial_x\xi)^2=\gamma^2\Theta^2 x^{-2\alpha-2}[x+\alpha(1-x)]^2, \quad w(\partial_x\xi)^2=\gamma^2\Theta^2x^{-\alpha-2}[x+\alpha(1-x)]^2.
\end{equation*}
These lead to the following result: 
\begin{equation*}
	\begin{split}
	\partial_x\left[w(\partial_x\xi)^2\right]
	&=\gamma^2\Theta^2 \left((-\alpha-2)x^{-\alpha-3}[x+\alpha(1-x)]^2+2x^{-\alpha-2}[x+\alpha(1-x)](1-\alpha)\right)\\
	&=\gamma^2\Theta^2\left(-\alpha x^{-\alpha-3}[x+\alpha(1-x)][\alpha+2+x-\alpha x]\right).
	\end{split}
\end{equation*}
Consequently, for $x\in Q_k^3$, we have 
\begin{equation*}
	w(\partial_x\xi)\partial_x\left[w(\partial_x\xi)^2\right]=\gamma^3\Theta^3 \left(\alpha x^{-\alpha-4}[x+\alpha(1-x)]^2[\alpha+2+x-\alpha x]\right).
\end{equation*}
}  Now, since 
\begin{equation*}
w(\partial_x\xi)\partial_x\left[w(\partial_x\xi)^2\right]=
\begin{cases}
(2 - \alpha)^4\gamma^3\Theta^3 x^{2 - \alpha}, &x\in Q_k^1,\\
\gamma^3\Theta^3\left[\alpha x^{-4 - \alpha}(\alpha + x-\alpha x)^2(2+\alpha + x-\alpha x)\right], &x\in Q_k^3,
\end{cases}
\end{equation*}
we have
\begin{equation*}
\begin{split}
E_3(Q_k)
&=(P_{12}v_k+P_{13}v_k,P_{23}v_k)_{L^2(Q_k)}\\
&\geq (2 - \alpha)^4s^3\gamma^3\iint_{Q_k^1}\Theta^3v_k^2 x^{2 - \alpha}\mathrm{d}x\mathrm{d}t + Cs^3\gamma^3\iint_{Q_k^3} \Theta^3 v_k^2\mathrm{d}x\mathrm{d}t\\
&\quad - Cs^3\gamma^3\iint_{Q_k^2}\Theta^3 v_k^2\mathrm{d}x\mathrm{d}t.
\end{split}
\end{equation*}
Here, the positive constant $C$ depends only on $\alpha, a$, and $b$.

Finally, by combining $E_1(Q_k), E_2(Q_k)$, and $E_3(Q_k)$ with the identity
\begin{equation*}
2(P_1v_k,P_2v_k)_{L^2(Q_k)}+\|P_1v_k\|_{L^2(Q_k)}^2+\|P_2v_k\|_{L^2(Q_k)}^2=\|e^{s\xi}g\|_{L^2(Q_k)}^2,
\end{equation*}
and choosing a sufficiently large $\gamma_0\geq 1$, when $\gamma\geq \gamma_0$, we obtain
\begin{equation}\label{01.03.1}
\begin{split}
&2(2 - \alpha)^2s\gamma\iint_{Q_k^1}\Theta x^\alpha (\partial_xv_k)^2\mathrm{d}x\mathrm{d}t + Cs\gamma\iint_{Q_k^3}\Theta (\partial_xv_k)^2\mathrm{d}x\mathrm{d}t\\
&+ 2(2 - \alpha)^4s^3\gamma^3\iint_{Q_k^1} \Theta^3 v_k^2x^{2 - \alpha}\mathrm{d}x\mathrm{d}t + Cs^3\gamma^3\iint_{Q_k^3}\Theta^3 v_k^2\mathrm{d}x\mathrm{d}t+\|P_1v_k\|_{L^2(Q_k)}^2+\|P_2v_k\|_{L^2(Q_k)}^2\\
&\leq \|e^{s\xi}g_k\|_{L^2(Q_k)}^2 + Cs\gamma\iint_{Q_k^2}\Theta (\partial_xv_k)^2\mathrm{d}x\mathrm{d}t + Cs^3\gamma^3\iint_{Q_k^2}\Theta^3v_k^2\mathrm{d}x\mathrm{d}t + Cs\gamma\iint_{Q_k^1}\Theta^{\frac{3}{2}}v_k^2\mathrm{d}x\mathrm{d}t\\
&\leq \|e^{s\xi}g_k\|_{L^2(Q_k)}^2 + Cs\gamma\iint_{Q_k^2}\Theta (\partial_xv_k)^2\mathrm{d}x\mathrm{d}t + Cs^3\gamma^3\iint_{Q_k^2}\Theta^3v_k^2\mathrm{d}x\mathrm{d}t
\end{split}
\end{equation}
by applying Lemma \ref{01.03.L1} (used in the second inequality above), and
\begin{equation}\label{01.03.2}
\begin{split}
Cs\gamma\iint_{Q_k^1}\Theta^{\frac{3}{2}}v_k^2\mathrm{d}x\mathrm{d}t
&\leq Cs\gamma^2\iint_{Q_k^1} \Theta^3v_k^2x^{2 - \alpha}\mathrm{d}x\mathrm{d}t + Cs\iint_{Q_k^1}\Theta v_k^2x^{\alpha - 2}\mathrm{d}x\mathrm{d}t\\
&\leq Cs\gamma^2\iint_{Q_k^1} \Theta^3v_k^2x^{2 - \alpha}\mathrm{d}x\mathrm{d}t + Cs\iint_{Q_k}\Theta x^\alpha  (\partial_xv_k)^2\mathrm{d}x\mathrm{d}t\\
&\leq Cs\gamma^2\iint_{Q_k^1} \Theta^3v_k^2x^{2 - \alpha}\mathrm{d}x\mathrm{d}t + Cs\iint_{Q_k^1}\Theta x^\alpha (\partial_xv_k)^2\mathrm{d}x\mathrm{d}t\\
&\quad + Cs\iint_{Q_k^2\cup Q_k^3}\Theta (\partial_xv_k)^2\mathrm{d}x\mathrm{d}t.
\end{split}
\end{equation}
Here, the positive constants $C$ depend only on $\alpha, a, b$, and $T$.

 \begin{lemma}\label{01.03.L1}
Let $\alpha \in (0,1)$, and let $v_k$ be defined as in \eqref{01.03.4}. Then, the following inequality holds:
\begin{equation*}
\iint_{Q_k} \Theta x^{\alpha - 2} v_k^2 \, \mathrm{d}x \, \mathrm{d}t \leq C \iint_{Q_k} \Theta x^\alpha (\partial_x v_k)^2 \, \mathrm{d}x \, \mathrm{d}t.
\end{equation*}
\end{lemma}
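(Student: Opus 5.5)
This is a Hardy inequality with weight $x^\alpha$, applied for each fixed $t$. Since $\Theta=\Theta(t)$ does not depend on $x$, it suffices to prove the one-dimensional estimate
\begin{equation*}
\int_{1/k}^1 x^{\alpha-2}u^2\,\mathrm{d}x\leq C\int_{1/k}^1 x^{\alpha}(u')^2\,\mathrm{d}x
\end{equation*}
for every $u\in H^1_0(1/k,1)$, with $C$ depending only on $\alpha$ and not on $k$. We then apply it to $u=v_k(\cdot,t)$, multiply by $\Theta(t)$ and integrate over $t\in(0,T)$. The function $v_k(\cdot,t)=e^{s\xi}\varphi_k(\cdot,t)$ lies in $H_0^1(\Omega_k)$ for a.e. $t$, because \eqref{01.02.2} is uniformly parabolic on $Q_k$ and $e^{s\xi}$ is smooth and bounded on $\overline{\Omega_k}\times(0,T)$. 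In fact $e^{s\xi}$ vanishes together with its derivatives as $t\to0,T$, so there is no integrability issue in $t$.

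To prove the one-dimensional inequality, extend $u$ by zero to $(0,1)$. The extended function lies in $H^1(0,1)$ and vanishes on $(0,1/k]$ and at $x=1$, so it is enough to prove the inequality on $(0,1)$ for Lipschitz functions with $u=0$ near $0$. Since $\alpha<1$, we have $\alpha-1<0$, so $x^{\alpha-2}=\frac{1}{\alpha-1}(x^{\alpha-1})'$. Integrating by parts gives
\begin{equation*}
\int_0^1 x^{\alpha-2}u^2\,\mathrm{d}x=\Big[\tfrac{x^{\alpha-1}}{\alpha-1}u^2\Big]_0^1-\frac{2}{\alpha-1}\int_0^1 x^{\alpha-1}u u'\,\mathrm{d}x .
\end{equation*}
Both boundary terms vanish: $u(1)=0$, and $u\equiv0$ near $0$. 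Therefore
\begin{equation*}
\int_0^1 x^{\alpha-2}u^2\,\mathrm{d}x=\frac{2}{1-\alpha}\int_0^1 x^{\frac{\alpha-2}{2}}u\cdot x^{\frac{\alpha}{2}}u'\,\mathrm{d}x .
\end{equation*}
By Cauchy--Schwarz,
\begin{equation*}
\int_0^1 x^{\alpha-2}u^2\,\mathrm{d}x\leq \frac{2}{1-\alpha}\Big(\int_0^1 x^{\alpha-2}u^2\Big)^{1/2}\Big(\int_0^1x^\alpha (u')^2\Big)^{1/2}.
\end{equation*}
The left-hand side is finite, since $u$ vanishes near $0$. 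Dividing through yields the inequality with $C=\frac{4}{(1-\alpha)^2}$.

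The only delicate point is the justification of the vanishing boundary term at $x=0$ and of finiteness before dividing. Both are automatic here, since we work on $\Omega_k=(1/k,1)$ where $u(1/k)=0$; we may equally integrate by parts directly on $(1/k,1)$. This is where the restriction $\alpha\in(0,1)$ enters, through the sign of $1-\alpha$. The constant is independent of $k$, $s$, $\gamma$ and $T$, which is what is needed to pass to the limit $k\to\infty$ later via Theorem \ref{01.01.T2}.
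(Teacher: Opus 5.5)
Your proposal is correct and is essentially the paper's proof: for fixed $t$, integrate $x^{\alpha-1}\partial_x(v_k^2)$ by parts on $\Omega_k$ using $v_k(1/k)=v_k(1)=0$, apply Cauchy--Schwarz, and obtain $C=4/(1-\alpha)^2$ independent of $k$ (and of $s,\gamma,T$). One small correction to your closing remark: since both boundary terms vanish on $(1/k,1)$, the argument only uses $\alpha\neq 1$, not the sign of $1-\alpha$.
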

\begin{proof}
It suffices to establish the following inequality:
\begin{equation*}
\int_{\Omega_k} x^{\alpha - 2} v_k^2 \, \mathrm{d}x \leq C \int_{\Omega_k} x^\alpha (\partial_x v_k)^2 \, \mathrm{d}x.
\end{equation*}
Given that $v_k = 0$ at $x = k^{-1}$ and $x = 1$, we have
\begin{equation*}
\begin{split}
2\int_{\Omega_k} x^{\alpha - 1} v_k \partial_x v_k \, \mathrm{d}x
&= \int_{\Omega_k} x^{\alpha - 1} \partial_x v_k^2 \, \mathrm{d}x = -(\alpha - 1) \int_{\Omega_k} x^{\alpha - 2} v_k^2 \, \mathrm{d}x.
\end{split}
\end{equation*}
Consequently,
\begin{equation*}
\begin{split}
(1 - \alpha) \int_{\Omega_k} x^{\alpha - 2} v_k^2 \, \mathrm{d}x
&= 2\int_{\Omega_k} \left(x^{\frac{\alpha}{2} - 1} v_k\right) \left(x^{\frac{\alpha}{2}} \partial_x v_k\right) \, \mathrm{d}x \\
&\leq 2\left(\int_{\Omega_k} x^{\alpha - 2} v_k^2 \, \mathrm{d}x\right)^{\frac{1}{2}} \left(\int_{\Omega_k} x^\alpha (\partial_x v_k)^2 \, \mathrm{d}x\right)^{\frac{1}{2}}.
\end{split}
\end{equation*}
Thus,
\begin{equation*}
 \int_{\Omega_k} x^{\alpha - 2} v_k^2 \, \mathrm{d}x \leq \frac{4}{(1 - \alpha)^2} \int_{\Omega_k} x^\alpha (\partial_x v_k)^2 \, \mathrm{d}x.
\end{equation*}
\end{proof}

{{From the definition of $P_1 v_k$ in \dref{gbz1}}}, we obtain
\begin{equation*}
\begin{split}
&s^{-1} \iint_{Q_k} \Theta^{-1} |\partial_t v_k|^2 \, \mathrm{d}x \, \mathrm{d}t \\
&\leq C \left(+s^{-1} \|P_1 v\|_{L^2(Q_k)}^2 + s \iint_{Q_k} \Theta^{-1} w^2 (\partial_x v_k)^2 (\partial_x \xi)^2 \, \mathrm{d}x \, \mathrm{d}t + s \iint_{Q_k} \Theta^{-1} v_k^2 \left[\partial_x (w \partial_x \xi)\right]^2 \, \mathrm{d}x \, \mathrm{d}t\right) \\
&\leq C \bigg(+s^{-1} \|P_1 v\|_{L^2(Q_k)}^2 + s \iint_{Q_k^1} \Theta x^\alpha (\partial_x v_k)^2 \, \mathrm{d}x \, \mathrm{d}t + s \iint_{Q_k^1} \Theta v_k^2 \, \mathrm{d}x \, \mathrm{d}t \\
&\hspace{15mm} + s \iint_{Q_k^2 \cup Q_k^3} \Theta (\partial_x v_k)^2 \, \mathrm{d}x \, \mathrm{d}t + s \iint_{Q_k^2 \cup Q_k^3} \Theta v_k^2 \, \mathrm{d}x \, \mathrm{d}t\bigg) \\
&\leq C \bigg(+s^{-1} \|P_1 v\|_{L^2(Q_k)}^2 + s \iint_{Q_k^1} \Theta x^\alpha (\partial_x v_k)^2 \, \mathrm{d}x \, \mathrm{d}t + s \iint_{Q_k^2 \cup Q_k^3} \Theta (\partial_x v_k)^2 \, \mathrm{d}x \, \mathrm{d}t \\
&\hspace{15mm} + s \iint_{Q_k^1} \Theta^3 v_k^2 x^{2 - \alpha} \, \mathrm{d}x \, \mathrm{d}t + s \iint_{Q_k^2 \cup Q_k^3} \Theta v_k^2 \, \mathrm{d}x \, \mathrm{d}t\bigg),
\end{split}
\end{equation*}
where the third inequality employs the same argument as in \eqref{01.03.2}.
Similarly, from the definition of $P_2 v_k$ defined in \dref{gbz2}, we have
\begin{eqnarray*}
&&s^{-1} \iint_{Q_k} \Theta^{-1} |\partial_x (w \partial_x v_k)|^2 \, \mathrm{d}x \, \mathrm{d}t \\
&&\leq C \left(+s^{-1} \|P_2 v_k\|_{L^2(Q_k)}^2 + s \iint_{Q_k} \Theta^{-1} v_k^2 (\partial_t \xi)^2 \, \mathrm{d}x \, \mathrm{d}t + s^3 \iint_{Q_k} \Theta^{-1} v_k^2 \left[w (\partial_x \xi)^2\right]^2 \, \mathrm{d}x \, \mathrm{d}t\right) \\
&&\leq C \bigg(+s^{-1} \|P_2 v_k\|_{L^2(Q_k)}^2 + s \iint_{Q_k} \Theta^{\frac{3}{2}} v_k^2 \, \mathrm{d}x \, \mathrm{d}t \\
&&\hspace{15mm} {{+ s^3 \iint_{Q_k^1} \Theta^3 v_k^2 x^{4- 2\alpha} \, \mathrm{d}x \, \mathrm{d}t + s^3 \iint_{Q_k^2 \cup Q_k^3} \Theta^{3} v_k^2 \, \mathrm{d}x \, \mathrm{d}t\bigg)}} \\
&&\leq C \bigg(+s^{-1} \|P_2 v_k\|_{L^2(Q_k)}^2 + s \iint_{Q_k} \Theta^{\frac{3}{2}} v_k^2 \, \mathrm{d}x \, \mathrm{d}t \\
&&\hspace{15mm} {{+ s^3 \iint_{Q_k^1} \Theta^3 v_k^2 x^{2- \alpha} \, \mathrm{d}x \, \mathrm{d}t + s^3 \iint_{Q_k^2 \cup Q_k^3} \Theta^{3} v_k^2 \, \mathrm{d}x \, \mathrm{d}t\bigg)}} \\
&&\leq C \bigg(+s^{-1} \|P_2 v_k\|_{L^2(Q_k)}^2 + s \iint_{Q_k^1} \Theta x^\alpha (\partial_x v_k)^2 \, \mathrm{d}x \, \mathrm{d}t + s \iint_{Q_k^2 \cup Q_k^3} \Theta (\partial_x v_k)^2 \, \mathrm{d}x \, \mathrm{d}t \\
&&\hspace{15mm} {{+ s^3 \iint_{Q_k^1} \Theta^{3} v_k^2 x^{2- \alpha} \, \mathrm{d}x \, \mathrm{d}t + s^3 \iint_{Q_k^2 \cup Q_k^3} \Theta^{3} v_k^2 \, \mathrm{d}x \, \mathrm{d}t\bigg)}},
\end{eqnarray*}
again using the same argument as in \eqref{01.03.2} for the third inequality.
Combining the above results with \eqref{01.03.1}, we obtain
\begin{equation}\label{01.03.3}
\begin{split}
&s \iint_{Q_k^1} \Theta x^\alpha (\partial_x v_k)^2 \, \mathrm{d}x \, \mathrm{d}t + s \iint_{Q_k^3} \Theta (\partial_x v_k)^2 \, \mathrm{d}x \, \mathrm{d}t \\
&+ s^3 \iint_{Q_k^1} \Theta^3 v_k^2 x^{2 - \alpha} \, \mathrm{d}x \, \mathrm{d}t + s^3 \iint_{Q_k^3} \Theta^3 v_k^2 \, \mathrm{d}x \, \mathrm{d}t \\
&+ s^{-1} \iint_{Q_k} \Theta^{-1} (|\partial_t v_k|^2 + |\partial_x (w \partial_x v_k)|^2) \, \mathrm{d}x \, \mathrm{d}t \\
&\leq +C \|e^{s \xi} g_k\|_{L^2(Q_k)}^2 + C s \iint_{Q_k^2} \Theta (\partial_x v_k)^2 \, \mathrm{d}x \, \mathrm{d}t + C s^3 \iint_{Q_k^2} \Theta^3 v_k^2 \, \mathrm{d}x \, \mathrm{d}t,
\end{split}
\end{equation}
where the constants $C > 0$ depend only on $\alpha$, $a$, $b$, and $T$.
Choose $\zeta \in C^\infty(\mathbb{R})$ such that
\begin{equation*}
0 \leq \zeta \leq 1, \quad \zeta = 1 \text{ on } \left(\frac{2a + b}{3}, \frac{a + 2b}{3}\right), \quad {{\zeta = 0 \text{ on } (-\infty, a) \cup (b, +\infty)}},
\end{equation*}
and
\begin{equation*}
|\zeta'| \leq \frac{C}{b - a} \text{ on } \mathbb{R}.
\end{equation*}
Note that ($0 < \varepsilon < \frac{a^\alpha}{2}$)
\begin{eqnarray*}
&&s \iint_{Q_k^2} \Theta (\partial_x v_k)^2 \, \mathrm{d}x \, \mathrm{d}t \\
&&\leq s \iint_{Q_k} \Theta \zeta (\partial_x v_k)^2 \, \mathrm{d}x \, \mathrm{d}t \\
&&= -s \iint_{Q_k} \Theta (\partial_x \zeta) v_k \partial_x v_k \, \mathrm{d}x \, \mathrm{d}t - s \iint_{Q_k} \Theta v_k \zeta w^{-1} \partial_x (w \partial_x v_k) \, \mathrm{d}x \, \mathrm{d}t \\
&&\hspace{4.5mm} + s \iint_{Q_k} \Theta v_k \zeta (\partial_x w^{-1}) (w \partial_x v_k) \, \mathrm{d}x \, \mathrm{d}t \\
&&\leq \varepsilon \left(s^{-1} \iint_{\omega \times (0, T)} \Theta^{-1} |\partial_x (w \partial_x v_k)|^2 \, \mathrm{d}x \, \mathrm{d}t + s \iint_{\omega \times (0, T)} \Theta (\partial_x v_k)^2 \, \mathrm{d}x \, \mathrm{d}t\right) \\
&&\hspace{4.5mm} + C \frac{1}{2\varepsilon} s^3 \iint_{\omega \times (0, T)} \Theta^3 v_k^2 \, \mathrm{d}x \, \mathrm{d}t \\
&&\leq \varepsilon \left(s^{-1} \iint_{\omega \times (0, T)} \Theta^{-1} |\partial_x (w \partial_x v_k)|^2 \, \mathrm{d}x \, \mathrm{d}t + \frac{1}{a^\alpha} s \iint_{Q_k} \Theta x^\alpha (\partial_x v_k)^2 \, \mathrm{d}x \, \mathrm{d}t\right) \\
&&\hspace{4.5mm} + C \frac{1}{2\varepsilon} s^3 \iint_{\omega \times (0, T)} \Theta^3 v_k^2 \, \mathrm{d}x \, \mathrm{d}t,
\end{eqnarray*}
which implies that
\begin{equation*}
\begin{split}
&\frac{1}{2} s \iint_{Q_k^2} \Theta (\partial_x v_k)^2 \, \mathrm{d}x \, \mathrm{d}t \\
&\leq \varepsilon s^{-1} \iint_{\omega \times (0, T)} \Theta^{-1} |\partial_x (w \partial_x v_k)|^2 \, \mathrm{d}x \, \mathrm{d}t + \frac{1}{a^\alpha} \varepsilon s \iint_{Q_k^1} \Theta x^\alpha (\partial_x v_k)^2 \, \mathrm{d}x \, \mathrm{d}t \\
&\hspace{4.5mm} + \frac{1}{a^\alpha} \varepsilon s \iint_{Q_k^3} \Theta (\partial_x v_k)^2 \, \mathrm{d}x \, \mathrm{d}t + C \frac{1}{2\varepsilon} s^3 \iint_{\omega \times (0, T)} \Theta^3 v_k^2 \, \mathrm{d}x \, \mathrm{d}t,
\end{split}
\end{equation*}
where the constants $C > 0$ depend only  on $\alpha$, $a$, $b$, and $T$. Taking $\varepsilon > 0$ sufficiently small and combining with \eqref{01.03.3}, we obtain
\begin{equation}\label{01.03.5}
\begin{split}
&s \iint_{Q_k^1} \Theta x^\alpha (\partial_x v_k)^2 \, \mathrm{d}x \, \mathrm{d}t + s \iint_{Q_k^2 \cup Q_k^3} \Theta (\partial_x v_k)^2 \, \mathrm{d}x \, \mathrm{d}t \\
&+ s^3 \iint_{Q_k^1} \Theta^3 v_k^2 x^{2 - \alpha} \, \mathrm{d}x \, \mathrm{d}t + s^3 \iint_{Q_k^2 \cup Q_k^3} \Theta^3 v_k^2 \, \mathrm{d}x \, \mathrm{d}t \\
&+ s^{-1} \iint_{Q_k} \Theta^{-1} (|\partial_t v_k|^2 + |\partial_x (w \partial_x v_k)|^2) \, \mathrm{d}x \, \mathrm{d}t \\
&\leq C \|e^{s \xi} g_k\|_{L^2(Q_k)}^2 + C s^3 \iint_{\omega \times (0, T)} \Theta^3 v_k^2 \, \mathrm{d}x \, \mathrm{d}t.
\end{split}
\end{equation}

 Finally, let us consider $\vp_k = e^{-s\xi}v_k$ in equation \eqref{01.03.4}. By applying the product rule for differentiation, we obtain
\begin{equation*}
\begin{split}
\pt_x \vp_k
&= e^{-s\xi}\pt_x v_k + (-s\pt_x\xi)v_ke^{-s\xi}\\
&=
\begin{cases}
e^{-s\xi}\left[\pt_x v_k - (2 - \al)s\ga\Theta x^{1 - \al} v_k\right], & (x,t) \in Q_k^1, \\
e^{-s\xi}\left[\pt_x v_k + s\ga\Theta x^{-\al - 1}(\al + x - \al x)v_k\right], & (x,t) \in Q_k^3.
\end{cases}
\end{split}
\end{equation*}
Consequently, we have the following inequality:
\begin{equation*}\label{01.03.6}
\begin{split}
&s\iint_{Q_k^1}\Theta x^\al (\pt_x\vp_k)^2e^{2s\xi}\df x\df t + s\iint_{Q_k^2 \cup Q_k^3}\Theta (\pt_x\vp_k)^2e^{2s\xi}\df x\df t\\
&+ s^3\iint_{Q_k^1} \Theta^3 \vp_k^2x^{2 - \al}e^{2s\xi}\df x\df t + s^3\iint_{Q_k^2 \cup Q_k^3}\Theta^3 \vp_k^2e^{2s\xi}\df x\df t\\
&\leq C\|e^{s\xi}g_k\|_{L^2(Q_k)}^2 + Cs^3 \iint_{\om \times (0,T)}\Theta^3\vp_k^2e^{2s\xi}\df x\df t,
\end{split}
\end{equation*}
where the positive constant $C$ depends only  on $a$, $b$, $\al$, and $T$.
Based on the above derivation, we now present Theorem \ref{01.03.T1}.
\begin{theorem}\label{01.03.T1}
Let $T > 0$ and $k \in \N$. Then, there exists a positive constant $C$, which depends only on $\al$, $a$, $b$, and $T$, such that for every solution $\vp_k$ of \eqref{01.02.2} and for all $s \geq 1$, the following inequality holds:
\begin{equation}\label{01.03.6}
\begin{split}
&s\iint_{Q_k^1}\Theta x^\al (\pt_x\vp_k)^2e^{2s\xi}\df x\df t + s\iint_{Q_k^2 \cup Q_k^3}\Theta (\pt_x\vp_k)^2e^{2s\xi}\df x\df t\\
&+ s^3\iint_{Q_k^1} \Theta^3 \vp_k^2x^{2 - \al}e^{2s\xi}\df x\df t + s^3\iint_{Q_k^2 \cup Q_k^3}\Theta^3 \vp_k^2e^{2s\xi}\df x\df t\\
&\leq C\|e^{s\xi}g_k\|_{L^2(Q_k)}^2 + Cs^3\iint_{\om \times (0,T)}\Theta^3\vp_k^2e^{2s\xi}\df x\df t.
\end{split}
\end{equation}
\end{theorem}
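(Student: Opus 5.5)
The plan is to take the estimate \eqref{01.03.5} for $v_k=e^{s\xi}\vp_k$ as the starting point and transfer it back to $\vp_k$ via \eqref{01.03.4}, checking that no constant depends on $k$. Since $\xi$ does not depend on $k$ and all constants in Steps 1--3 and in Lemma \ref{01.03.L1} depend only on $\al,a,b,T$, the resulting $C$ will also be independent of $k$. The parameter $\ga$ is fixed once and for all at $\ga_0$ (chosen as in the derivation of \eqref{01.03.1}), so it can be absorbed into $C$. Because \eqref{01.02.2} is uniformly parabolic on $\Om_k$, $\vp_k$ is regular enough for all the integrations by parts in Steps 1--6 to be justified.

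First I would bound the zero-order terms. Since $\vp_k^2e^{2s\xi}=v_k^2$, the terms $s^3\iint\Theta^3\vp_k^2x^{2-\al}e^{2s\xi}$ on $Q_k^1$ and $s^3\iint\Theta^3\vp_k^2e^{2s\xi}$ on $Q_k^2\cup Q_k^3$ equal the corresponding $v_k$ terms on the left of \eqref{01.03.5}. For the same reason the observation term on the right is the same in both formulations.

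For the gradient terms I would use the product-rule formula for $\pt_x\vp_k$ displayed above, together with $(p+q)^2\le 2p^2+2q^2$.

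On $Q_k^1$:
\begin{equation*}
s\Theta x^\al(\pt_x\vp_k)^2e^{2s\xi}\le 2s\Theta x^\al(\pt_xv_k)^2+2(2-\al)^2\ga^2 s^3\Theta^3x^{2-\al}v_k^2 .
\end{equation*}
This works because $x^\al\cdot x^{2-2\al}=x^{2-\al}$, which is exactly the weight appearing in \eqref{01.03.5}.

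On $Q_k^3$, the factor $x^{-\al-1}(\al+x-\al x)$ is bounded, since $x\ge \f{a+2b}{3}>0$ there. Hence
\begin{equation*}
s\Theta(\pt_x\vp_k)^2e^{2s\xi}\le 2s\Theta(\pt_xv_k)^2+Cs^3\Theta^3v_k^2 .
\end{equation*}

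On $Q_k^2$, $\pt_x\xi=\ga\Theta\eta'$ with $\eta\in C^3$ on the compact interval $[\f{2a+b}{3},\f{a+2b}{3}]$, so $\eta'$ is bounded there. The same pointwise bound as on $Q_k^3$ then holds. Each resulting term is controlled by the left side of \eqref{01.03.5}, which yields \eqref{01.03.6} for every $s\ge1$, using $s\le s^3$.

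The step I expect to need the most care is verifying that \eqref{01.03.5} genuinely holds for all $s\ge 1$ with $k$-independent constants. Two points must be checked. The first is absorbing the lower-order terms in Step 2, which scale like $s^2\Theta^{9/4}$, into $s^3\Theta^3$: this requires $\Theta\ge c_T>0$ and a large $\ga_0$, or a large threshold $s_0$ that is afterwards folded into $C$ on the bounded range $1\le s\le s_0$. The second is the boundary terms at $x=k^{-1}$ and $x=1$ in \eqref{gbz3}. At $x=k^{-1}$ the term is $s\int w^2(\pt_x\xi)(\pt_xv_k)^2\,dt$, which is nonnegative because $\pt_x\xi>0$ near $0$. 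At $x=1$ the term carries a minus sign and $\pt_x\xi(1,t)=-\ga\Theta<0$, so it is also nonnegative. Both can therefore be dropped from below. This sign property is exactly what the choice \eqref{05.06.1} of $\eta$ is designed for, and I would state it explicitly.
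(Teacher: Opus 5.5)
Your route is the paper's: the theorem is obtained by pulling \eqref{01.03.5} back to $\varphi_k=e^{-s\xi}v_k$ through the product-rule formula for $\partial_x\varphi_k$. Your pointwise bounds on $Q_k^1$, $Q_k^2$, $Q_k^3$ are exactly what that last step needs.

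The one claim that fails as stated is the sign of the boundary term at $x=k^{-1}$ in \eqref{gbz3} for \emph{every} $k\in\mathbb{N}$. You have $\partial_x\xi(k^{-1},t)=(2-\alpha)\gamma\Theta k^{\alpha-1}>0$ only when $k^{-1}<\frac{2a+b}{3}$. If $k^{-1}>\frac{a+2b}{3}$, then $\partial_x\xi(k^{-1},t)<0$ by \eqref{05.06.1}. In that case $s\int_0^T w^2(k^{-1})\,\partial_x\xi(k^{-1},t)\,(\partial_x v_k)^2(k^{-1},t)\,dt$ is negative and cannot be dropped.

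This is a defect of the statement itself, and the paper's proof shares it. Take $k\geq 2$ with $k^{-1}\geq b$, which is possible as soon as $b\leq\frac12$. Then $\omega\cap\Omega_k=\emptyset$ and $Q_k\subset Q_k^3$. With $g=0$ and $\varphi_T|_{\Omega_k}\neq 0$, the right-hand side of \eqref{01.03.6} is zero, while $s^3\iint_{Q_k^3}\Theta^3\varphi_k^2e^{2s\xi}\,dx\,dt$ on the left is positive. So the theorem must be restricted to $k>\frac{3}{2a+b}$. This costs nothing, since Theorem~\ref{01.03.T2} only uses $k\to\infty$, and for such $k$ your argument goes through.

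Separately, you offer two ways to absorb the $s^2\gamma^2\Theta^{9/4}$ terms from Step 2, and only the first is legitimate; it is also what the paper does. Since $\Theta\geq 256\,T^{-8}$, you have $s^2\gamma^2\Theta^{9/4}\leq \frac{T^6}{64}\,\gamma^{-1}\,s^3\gamma^3\Theta^3$ for $s\geq 1$. Hence a large $\gamma_0=\gamma_0(\alpha,a,b,T)$ handles all $s\geq 1$ at once. An estimate proved only for $s\geq s_0$ does not extend to $1\leq s\leq s_0$ merely by enlarging $C$. The weights for different $s$ are not comparable: $e^{2s\xi}/e^{2s_0\xi}=e^{2(s_0-s)|\xi|}$ is unbounded as $t\to 0$ or $t\to T$.
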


 \subsection{Approximation}
Based on Theorem \ref{01.01.T2}, we derive the following Theorem \ref{01.03.T2}.
\begin{theorem}\label{01.03.T2}
Let $\omega \subset \Omega$ with $\overline{\omega} \subset \Omega$, and $\varphi_T \in L^2(\Omega)$. Then, there exists a constant $C$, which depends only  on $\alpha$, $a$, $b$, and $T$, such that for any solution $\varphi$ of \eqref{01.02.1} {{and for all $s\geq 1$}}, the following inequality holds:
\begin{equation}\label{01.03.7}
\begin{split}
&s\iint_{Q}\Theta x^\alpha (\partial_x\varphi)^2e^{2s\xi}\,dx\,dt + s^3\iint_{Q} \Theta^3 \varphi^2x^{2-\alpha}e^{2s\xi}\,dx\,dt\\
&\leq C\|e^{s\xi}g\|_{L^2(Q)}^2 + Cs^3\iint_{\omega \times (0,T)}\Theta^3\varphi^2e^{2s\xi}\,dx\,dt.
\end{split}
\end{equation}
\end{theorem}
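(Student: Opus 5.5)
The plan is to pass to the limit $k\to\infty$ in Theorem \ref{01.03.T1}, using the convergence furnished by Theorem \ref{01.01.T2}. Equation \eqref{01.02.1} is backward in time. Under the change of variable $t\mapsto T-t$ it becomes a forward problem of the type \eqref{01.01.E2}, with $N=1$, $A=w=x^\alpha$, $\Gamma=\{0\}$ and $\Omega_k=(1/k,1)$. The approximating problems \eqref{01.02.2} then become the time-reversed versions of \eqref{01.01.A2}. Since $\Theta$ and $\xi$ are symmetric under $t\mapsto T-t$, the Carleman inequality is unaffected by the reversal.

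\textbf{Step 1: uniform bounds and weak limit.} Denote by $\widetilde\varphi_k$ the extension of $\varphi_k$ by zero to $Q$. Theorem \ref{01.01.T2}, or directly the energy bound \eqref{01.01.6}, shows that $\{\widetilde\varphi_k\}$ is bounded in $H_0^{1,2}(Q;w)$ and in $L^\infty(0,T;L^2(\Omega))$. Hence, along a subsequence,
$\widetilde\varphi_k\rightharpoonup\varphi$ in $H_0^{1,2}(Q;w)$ and in $L^2(Q)$, and $\partial_x\widetilde\varphi_k\rightharpoonup\partial_x\varphi$ in $L^2(Q;w)$. The limit is the solution $\varphi$ of \eqref{01.02.1}, by uniqueness. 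Observe that on $Q_k^2\cup Q_k^3$ we have $x\ge (2a+b)/3$, so $x^\alpha$ is bounded above and below there. We may therefore weaken the left-hand side of \eqref{01.03.6} to
\[
c\Big(s\iint_{Q_k}\Theta x^\alpha(\partial_x\varphi_k)^2e^{2s\xi}\,dx\,dt+s^3\iint_{Q_k}\Theta^3x^{2-\alpha}\varphi_k^2e^{2s\xi}\,dx\,dt\Big),
\]
which for $\widetilde\varphi_k$ is exactly the left-hand side of \eqref{01.03.7} with integrals over $Q$.

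\textbf{Step 2: passage to the limit on each side.} Fix $s\ge1$. Since $\xi\le-\gamma\Theta|\eta|_\infty<0$ and $\Theta\to\infty$ at $t=0,T$, the weights $\Theta e^{2s\xi}$ and $\Theta^3e^{2s\xi}$ are bounded on $Q$. The maps $u\mapsto \iint_Q\Theta x^\alpha(\partial_xu)^2e^{2s\xi}$ and $u\mapsto\iint_Q\Theta^3x^{2-\alpha}u^2e^{2s\xi}$ are therefore continuous convex quadratic forms on $H_0^{1,2}(Q;w)$, and on $L^2(Q)$ respectively, because $x^{2-\alpha}\le1$. Convex continuous functionals are weakly lower semicontinuous, so the $\liminf$ of the left-hand side of \eqref{01.03.6} dominates the left-hand side of \eqref{01.03.7}.

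For the right-hand side, $\|e^{s\xi}g_k\|^2_{L^2(Q_k)}\le\|e^{s\xi}g\|^2_{L^2(Q)}$ holds trivially. The local term over $\omega\times(0,T)$ requires strong convergence, since weak convergence gives the wrong inequality direction there. For this term I will invoke the local strong convergence part of Theorem \ref{01.01.T2}: $\widetilde\varphi_k\to\varphi$ strongly in $L^2(\omega\times(0,T))$. Because $\Theta^3e^{2s\xi}$ is bounded, the right-hand side converges, and \eqref{01.03.7} follows.

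\textbf{Main obstacle.} The delicate point is the strong local convergence on $\omega$. Theorem \ref{01.01.T2} states it under the hypothesis $\varphi_0\in H^1(\Omega)$, whereas here we only assume $\varphi_T\in L^2(\Omega)$. I would handle this by density. First prove \eqref{01.03.7} for $\varphi_T\in H_0^1(\Omega)$, or smooth compactly supported data, using the argument above. Then approximate a general $\varphi_T\in L^2(\Omega)$ by $\varphi_T^n$ in $L^2(\Omega)$. By Lemma \ref{01.01.L4}, the corresponding solutions converge in $L^\infty(0,T;L^2)$ and in $H_0^{1,2}(Q;w)$, so both sides of \eqref{01.03.7} pass to the limit, again using the boundedness of the weights.

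An alternative route avoids the density step. Interior parabolic regularity on $\widehat\Omega\times(\tau,T-\tau)$ gives compactness there. Near $t=0$ and $t=T$, the factor $\Theta^3e^{2s\xi}$ is uniformly small, which controls the contribution of those time slabs. Either way, the constant $C$ is the one from Theorem \ref{01.03.T1}, independent of $k$, so it depends only on $\alpha,a,b,T$.
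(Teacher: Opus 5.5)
Your proposal is correct and follows the same route as the paper. You pass to the limit in Theorem~\ref{01.03.T1}, using the weak convergence and the local strong convergence on $\omega$ from Theorem~\ref{01.01.T2} for regular terminal data, and then extend to $\varphi_T\in L^2(\Omega)$ by density, via the energy estimate for $\varphi_n-\varphi$. You also make explicit what the paper's one-line ``taking the limit'' leaves implicit: the time reversal $t\mapsto T-t$, the boundedness of $\Theta e^{2s\xi}$ and $\Theta^3e^{2s\xi}$, and the weak lower semicontinuity of the left-hand side.
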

\begin{proof}
From Theorem \ref{01.01.T2}, for $\varphi_T \in H^1(\Omega)$, we have the weak convergence:
\begin{equation*}
\varphi_k \rightharpoonup \varphi \text{ weakly in } H_0^{1,2}(Q; w) \hbox{ and } \varphi_k \rightharpoonup \varphi \text{ weakly in } L^2(Q; w),
\end{equation*}
and the strong convergence:
\begin{equation*}
\varphi_k \rightarrow \varphi \text{ strongly in } L^2(\omega; w).
\end{equation*}
Given that $g_k \rightarrow g$ in $L^2(Q)$, we obtain \eqref{01.03.7} by taking the limit of \eqref{01.03.6}.
Next, let $\varphi_T^n \in H_0^1(\Omega)$ for $n \in \mathbb{N}$ such that $\varphi_T^n \rightarrow \varphi_T$ in $L^2(\Omega)$. Denote $\varphi_n$ ($n \in \mathbb{N}$) and $\varphi$  as the solutions of the equation \eqref{01.02.1} corresponding to the initial data $\varphi_T^n$ ($n \in \mathbb{N}$) and $\varphi_T$, respectively. Then, $\psi_n = \varphi_n - \varphi$ is the solution of the following equation:
\begin{equation*}
\begin{cases}
\partial_t \psi_n + \partial_x(w\partial_x\psi_n) = 0 &\text{in } Q, \\
\psi_n = 0 &\text{on } \partial Q, \\
\psi_n(T) = \varphi_T^n - \varphi_T &\text{in } L^2(\Omega),
\end{cases}
\end{equation*}
For every $\tau \in (0,T)$, we have
\begin{equation*}
\begin{split}
\frac{1}{2}\int_\Omega \psi_n(x,\tau)^2\,dx + \iint_Q w(\partial_x\psi_n)^2\,dx\,dt = \frac{1}{2}\int_\Omega \psi_n(x,T)^2\,dx \leq \frac{1}{2}\|\varphi_T^n - \varphi_T\|_{L^2(\Omega)}^2.
\end{split}
\end{equation*}
This implies that
\begin{equation*}
\varphi_n \rightarrow \varphi \text{ strongly in } H_0^{1,2}(Q; w),
\end{equation*}
by the compact embedding $H_0^1(\Omega; w) \hookrightarrow L^2(\Omega; w)$ (see \cite{Alabau}). Therefore, for the initial data $\varphi_T \in L^2(\Omega)$, we choose $\varphi_T^n \rightarrow \varphi_T$ in $L^2(\Omega)$ with $\varphi_T^n \in H_0^1(\Omega)$ for $n \in \mathbb{N}$, and by taking the limit, we obtain the desired result.
\end{proof}

{{Now, from Hardy's inequality (given as (2.1) in [1], p. 165, with $a(x)=x^\alpha$), namely,
\begin{equation*}
	\int_0^1 x^{\alpha-2}\varphi^2 \, dx \leq C \int_0^1 x^\alpha (\partial_x \varphi)^2 \, dx,
\end{equation*}
where $C > 0$ depends on $\alpha$ only, we derive:
\begin{equation*}
	\begin{split}
		s \iint_{Q} \Theta \varphi^2 e^{2s\xi} \, dx \, dt
		&\leq s \iint_Q \Theta x^{\alpha-2} \varphi^2 e^{2s\xi} \, dx \, dt \leq s \iint_Q \Theta x^\alpha \left[\partial_x(\varphi e^{s\xi})\right]^2 \, dx \, dt \\
		&= s \iint_Q \Theta x^\alpha \left[e^{s\xi} \partial_x \varphi + s e^{s\xi} \varphi \partial_x \xi\right]^2 \, dx \, dt \\
		&\leq 2s \iint_Q \Theta x^\alpha (\partial_x \varphi)^2 e^{2s\xi} \, dx \, dt + Cs^3 \gamma_0^2 \iint_Q \Theta^3 x^{2-\alpha} \varphi^2 e^{2s\xi} \, dx \, dt \\
		&\leq Cs^3 \gamma_0^2 \iint_{\omega \times (0,T)} \Theta^3 \varphi^2 e^{2s\xi} \, dx \, dt,
	\end{split}
\end{equation*}
where the third inequality uses $x^\alpha (\partial_x \xi)^2 \leq C \gamma_0^2 \Theta^2 x^{2-\alpha}$ for some constant $C > 0$ depending only on $\alpha$, and the last inequality employs (4.18) in Theorem 4.2 with $g=0$. By choosing $s_0 \geq 1$, we obtain:
\begin{equation*}
	\iint_{Q} \Theta \varphi^2 e^{2s_0\xi} \, dx \, dt \leq C \iint_Q \Theta^3 \varphi^2 e^{2s_0\xi} \, dx \, dt.
\end{equation*}
Here, the constant $C > 0$ depends solely on $\alpha$, $\gamma_0$, and $T$. Utilizing the inequalities:
\begin{equation*}
	e^{2s_0\xi} \Theta \geq e^{-C(1+\frac{1}{T})} \frac{1}{T^8} \quad \text{in} \quad (0,1) \times \left(\frac{T}{4}, \frac{3T}{4}\right),
\end{equation*}
and
\begin{equation*}
	e^{2s_0\xi} \Theta^3 \leq e^{-C(1+\frac{1}{T})} \frac{1}{T^8} \quad \text{in} \quad (0,1) \times (0,T),
\end{equation*}
we deduce:
\begin{equation*}
	\iint_{(0,1) \times (\frac{T}{4}, \frac{3T}{4})} |\varphi|^2 \, dx \, dt \leq C \iint_{\omega \times (0,T)} |\varphi|^2 \, dx \, dt,
\end{equation*}
where the constant $C > 0$ depends solely on $\alpha$, $a$, $b$, $\gamma_0$, and $T$. Finally, since:
\begin{equation*}
	\|\varphi(0)\|_{L^2(0,1)}^2 \leq \frac{2}{T} \int_{\frac{T}{4}}^{\frac{3T}{4}} \|\varphi(t)\|_{L^2(0,1)}^2 \, dt,
\end{equation*}
it follows that:
\begin{equation}\label{12.21.1}
	\|\varphi(0)\|_{L^2(0,1)}^2 \leq C \iint_{\omega \times (0,T)} |\varphi|^2 \, dx \, dt,
\end{equation}
which represents the observability inequality, where the constant $C > 0$ depends  on $\alpha$, $a$, $b$, $\gamma_0$, and $T$ only.}}  As established in \cite[Proposition 4.1, Section 4]{Alabau}, the following result holds firmly.
\begin{corollary}
The system \eqref{01.01.E2} is exactly controllable under the control $\chi_\omega f$.
\end{corollary}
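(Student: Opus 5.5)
The corollary follows from the observability inequality \eqref{12.21.1} by the standard duality (Hilbert Uniqueness Method) argument; this is exactly the route of \cite[Proposition 4.1, Section 4]{Alabau}. The controllability we can reach this way is null controllability, which for a dissipative system is the relevant notion of ``exact controllability'' (to zero, or to trajectories).

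\textbf{Step 1: set up duality.} For the control problem \eqref{01.01.E2} with $N=1$, $A=w=x^\alpha$ and right-hand side $\chi_\omega f$, the adjoint system is the backward equation \eqref{01.02.1} with $g=0$. For $\varphi_0\in L^2(\Omega)$ and $\varphi_T\in L^2(\Omega)$, let $y$ solve the forward problem and $\varphi$ the adjoint one. Pairing the two equations, justified through the approximation by \eqref{01.02.2} on $\Omega_k$ and Theorem \ref{01.01.T2}, gives
\begin{equation*}
\int_\Omega y(T)\varphi_T\,dx-\int_\Omega \varphi_0\varphi(0)\,dx=\iint_{\omega\times(0,T)} f\varphi\,dx\,dt .
\end{equation*}
Thus $y(T)=0$ for every $\varphi_T$ if and only if $\iint_{\omega\times(0,T)} f\varphi\,dx\,dt+\int_\Omega\varphi_0\varphi(0)\,dx=0$ for all $\varphi_T$.

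\textbf{Step 2: minimize a functional.} Define
\begin{equation*}
J(\varphi_T)=\frac12\iint_{\omega\times(0,T)}\varphi^2\,dx\,dt+\int_\Omega\varphi_0\varphi(0)\,dx
\end{equation*}
on $L^2(\Omega)$. This functional is convex and continuous, by the energy estimate of Lemma \ref{01.01.L4} applied backward. By \eqref{12.21.1} it is coercive in the seminorm $\|\varphi\|_{L^2(\omega\times(0,T))}$. Take the completion $H$ of $L^2(\Omega)$ under this seminorm. It is a norm thanks to the unique continuation encoded in the Carleman estimate, Theorem \ref{01.03.T2}. Extend $J$ to $H$, which is possible because $|\int\varphi_0\varphi(0)|\le C\|\varphi_0\|\,\|\varphi\|_{L^2(\omega\times(0,T))}$ by \eqref{12.21.1}. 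Direct methods then give a minimizer $\hat\varphi_T$.

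\textbf{Step 3: read off the control.} The Euler--Lagrange equation of $J$ at $\hat\varphi_T$ shows that $f=\hat\varphi\chi_\omega$ satisfies the identity of Step 1. Hence the corresponding solution satisfies $y(T)=0$, with the cost bound $\|f\|_{L^2}\le C\|\varphi_0\|_{L^2}$.

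\textbf{Main obstacle.} The delicate point is Step 1. One needs enough regularity of weak solutions in $H_0^{1,2}(Q;w)$ to justify the duality pairing and the time traces $\varphi(0)$, $y(T)$. I would handle this by performing the integration by parts on $Q_k$, where the equations are uniformly parabolic, and passing to the limit using Theorem \ref{01.01.T2}. This is the same approximation that underlies Theorem \ref{01.03.T2}. There is a second caveat: the source $f$ must lie in $L^2(Q;w^{-1})$ for Lemma \ref{01.01.L4}. Since $\overline\omega\subset(0,1)$ and $w$ is bounded below there, $\chi_\omega f\in L^2$ is enough.
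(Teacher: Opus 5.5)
Your proposal is correct and follows the paper's route. The paper derives the observability inequality \eqref{12.21.1} from the Carleman estimate of Theorem \ref{01.03.T2} and then simply invokes the Hilbert uniqueness method; you write out the duality identity and the minimization explicitly. Your reading of the conclusion as null controllability, with cost bound $\|f\|\le C\|\varphi_0\|$, is the right one, since that is exactly what HUM applied to \eqref{12.21.1} gives for this dissipative system.
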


\begin{proof}
From Theorem \ref{01.03.T2}, we can deduce   \cite[Lemma 4.2, Section 4, p. 187]{Alabau} in turn, implies   \cite[Proposition 4.1, Section 4]{Alabau}, which is the observability inequality for \eqref{01.01.E2}, {{or \eqref{12.21.1} directly}}. By the Hilbert uniqueness method, we conclude that the system \eqref{01.01.E2} is exactly controllable.
\end{proof}

\begin{remark}\label{Re.4.1} 
	{ It is important to highlight that the control domain  $\omega$ 
 serves as an interior subdomain within  $\Omega$. As a result, the control is applied exclusively to the subdomain
$\omega$, rather than the entire domain $\Omega$. Therefore, it makes no significant difference whether the controls
$f \in L^2(\Omega, w^{-1})$ or $f\in L^2(\Omega)$  as presented in \eqref{01.01.E2}. Indeed, due to Hardy's inequality, the case where
$f \in L^2(\Omega)$  can also be considered.
}
\end{remark}

\section{Concluding Remarks}\label{Se5}

 In this paper, we apply for the first time  the shape design method to approximate solutions for both degenerate elliptic and parabolic equations. As a practical demonstration, we employ this method to approximate the Carleman estimate for a one-dimensional  degenerate parabolic equation. Through this Carleman estimate, we derive the observability of the degenerate parabolic equation in a novel manner. This novel  methodology paves the way for future research aimed at addressing the Carleman estimate or observability of higher-dimensional degenerate parabolic equations.

\end{document}